\documentclass[reqno]{amsart}

\usepackage[a4paper]{geometry}

\usepackage[T2A]{fontenc}
\usepackage[utf8]{inputenc}
\usepackage{amsthm, amsmath, amsfonts, amssymb, mathtools}
\usepackage{geometry}
\usepackage{titleps}
\usepackage{soulutf8}
\usepackage{fontsize}
\usepackage{bbm}
\usepackage{cite}

\usepackage[mathscr]{eucal}

\usepackage{mathtools}
\usepackage{enumerate}
\usepackage{accents}
\usepackage{comment}
\usepackage{dsfont}

\usepackage{xcolor}
\usepackage[colorlinks=true]{hyperref}
\hypersetup{urlcolor=blue,citecolor=red,linkcolor=blue}
\usepackage[initials]{amsrefs}

\theoremstyle{plain}
\newtheorem{theorem}{Theorem}[section]
\newtheorem{lemma}{Lemma}[section]
\theoremstyle{remark}

\theoremstyle{definition}
\newtheorem*{definition}{Definition}

\newtheorem{corollary}{Corollary}[section]
\newcommand{\R}{\mathbb R}
\newcommand{\N}{\mathbb N}

\newcommand{\vertiii}[1]{{\left\vert\kern-0.25ex\left\vert\kern-0.25ex\left\vert #1
    \right\vert\kern-0.25ex\right\vert\kern-0.25ex\right\vert}}
    \newcommand{\vertiiis}[1]{{\vert\kern-0.25ex\vert\kern-0.25ex\vert #1
    \vert\kern-0.25ex\vert\kern-0.25ex\vert}}
\newcommand{\scal}[1]{{\left\langle\kern-0.25ex\left\langle #1
    \right\rangle\kern-0.25ex\right\rangle}}

\begin{document}
\title[ Plasticity of the unit spheres]{On the plasticity of the unit spheres of $\ell_1$, $\ell_{\infty}$, $c$, and Hilbert spaces}
\author[M.~Levchenko]{Maksym Levchenko}
\address{Department of Mathematics and Informatics, V. N. Karazin Kharkiv National
University, 61022 Kharkiv, Ukraine.}
\email{lemax2233@gmail.com}

\author[O.~Zavarzina]{Olesia Zavarzina}
\address{Department of Mathematics and Informatics, V. N. Karazin Kharkiv National
University, 61022 Kharkiv, Ukraine.}
\email{olesia.zavarzina@yahoo.com}
\begin{abstract} 
This paper demonstrates the expand-contract plasticity of the unit spheres of $\ell_1$, $\ell_{\infty}$, and $c$. Furthermore, it establishes the strong plasticity of the unit spheres of Hilbert spaces.

\end{abstract}
\subjclass[2020]{46B20, 46C05}
\keywords{non-expansive map; non-contractive map, unit sphere; expand-contract plastic space, strong plasticity}
\thanks{ The second author was supported  by the National Research Foundation of Ukraine, project No. 2025.07/0369 ``Qualitative methods of nonlinear analysis of heterogeneous structures''.}
\maketitle

\section{Introduction}
\begin{definition}
A metric space $(M,\rho)$ is said to be \textit{plastic} if every non-expansive bijective map $F:M \rightarrow M$ is an isometry.
\end{definition}
As demonstrated in \cite{plastms}, a totally bounded space is plastic, but the converse statement is not necessarily true, and there are even examples of unbounded plastic spaces.
According to \cite{plastms}, totally bounded metric spaces possess an even stronger property. Namely, it was shown that in such spaces, an increase in the distance between two points under the action of an arbitrary mapping is compensated by a decrease in the distance between some other pair of points. Later, in \cite{str_pl}, this property was called \textit{strong plasticity}.
\begin{definition}
A metric space $(M,\rho)$ is said to be \textit{strongly plastic} if every non-contractive mapping $F:M \rightarrow M$ is an isometric embedding.
\end{definition}
If $X$ is an infinite-dimensional Banach space, then both its unit ball $B_X$ and the unit sphere $S_X$ are not totally bounded. The unit balls of some Banach spaces have been shown to be plastic \cite{casc,l1ball,l1sum,ang,leoc,2new,fakh}. Whether $B_X$ (or $S_X$) is plastic for every Banach space $X$ remains an open question for the time being.  As the title suggests, this paper deals with the plasticity of the unit spheres of some (real) Banach spaces. \\
Section \ref{HS} is devoted to Hilbert spaces. In  \cite{str_pl}, it was shown that the unit ball of the Hilbert space $\ell_2$ is not strongly plastic, although it is plastic (as are the unit balls of all Hilbert spaces \cite{casc}). Here we show that for a Hilbert space $H$ the  strong plasticity of its unit sphere $S_H$ is a simple consequence of the parallelogram law.  In section \ref{l1} we demonstrate that the unit sphere of the sequence space $\ell_1$ is plastic. In the last section, we use the approach from \cite{leoc} to prove the plasticity of the unit spheres of $\ell_\infty$ and $c$. \\
For an element $x$ of a Banach sequence space, we denote its $n$-th coordinate by $x^n$, and by $\mathtt{supp}(x)$ we denote the set of all such $k \in \N$ that $x^k \neq 0$. For every $k \in \N$, we denote by $e_k$ the vector that has $1$ as its $k$-th coordinate and $0$ as its $j$-th coordinate for every $j \neq k$.\\
For a convex subset $A$ of a Banach space, we denote by $\mathtt{ext}(A)$ the set of extreme points of $A$.

\section{Hilbert spaces}\label{HS}
A Banach space $X$ is called \textit{strictly convex} if its unit sphere $S_X$ contains no non-trivial line segments, i.e. $S_X=\mathtt{ext}(B_X)$. Equivalently, $X$ is strictly convex if and only if $||x-y||=2$ implies that $y=-x$ for every $x,y \in S_X$. In particular, all Hilbert spaces are strictly convex.
\begin{lemma}

Let $X$ be a strictly convex space, and let $F:S_X \rightarrow S_X$ be a non-contractive mapping. 
Then $F(-x)=-F(x)$ for every $x\in S_X$. In particular, this equality holds for non-expansive bijections.
\end{lemma}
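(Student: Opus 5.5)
The plan is to apply the non-contractive property to the antipodal pair $x,-x$, and then use the characterization of strict convexity recalled just before the lemma. Fix $x\in S_X$. The points $x$ and $-x$ lie on $S_X$ and satisfy $\|x-(-x)\|=2$. Since $F$ is non-contractive,
\[
\|F(x)-F(-x)\| \ge \|x-(-x)\| = 2 .
\]
The images $F(x)$ and $F(-x)$ also lie in $S_X$, so the triangle inequality gives $\|F(x)-F(-x)\|\le \|F(x)\|+\|F(-x)\|=2$. Hence $\|F(x)-F(-x)\|=2$. By the equivalent form of strict convexity ($\|u-v\|=2$ with $u,v\in S_X$ forces $v=-u$), we obtain $F(-x)=-F(x)$. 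This settles the first claim, and there is no real obstacle here.

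For the \emph{in particular} clause, let $F:S_X\to S_X$ be a non-expansive bijection. The natural move is to apply the first part to $G=F^{-1}$. If $F$ is non-expansive, then for all $u,v\in S_X$
\[
\|G(u)-G(v)\| \ge \|F(G(u))-F(G(v))\| = \|u-v\| ,
\]
so $G$ is non-contractive. The first part then gives $G(-y)=-G(y)$ for every $y\in S_X$.

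Now take $y=F(x)$. This gives $G(-F(x))=-x$. Applying $F$ to both sides yields $-F(x)=F(-x)$, which is the required identity.

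The only point needing care is the passage to the inverse: this is where bijectivity is used, and it is what turns non-expansiveness into non-contractiveness.
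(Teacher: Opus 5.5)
Your proof is correct and follows essentially the same argument as the paper. For the first part you use the antipodal pair $x,-x$, the triangle inequality and strict convexity. For the non-expansive case you pass to the non-contractive inverse $F^{-1}$, which is where bijectivity is used, and then apply $F$ to recover $F(-x)=-F(x)$.
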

\begin{proof}
    Let $x\in S_X$. Then $||x-(-x)||=2 \leq ||F(x)-F(-x)||\leq ||F(x)||+||F(-x)||=2$, so $||F(x)-F(-x)||=2=||F(x)||+||-F(-x)||$, and the strict convexity of $X$ implies that $F(x)=-F(-x)$.\\
    Now suppose that $G:S_X \rightarrow S_X$ is a non-expansive bijection. Then $G^{-1}$ is a non-contractive bijection, and since $G^{-1}$ is surjective, there is an element $y\in S_X$ such that $x=G^{-1}(y)$. Thus, $G(-x)=G(-G^{-1}(y))=G(G^{-1}(-y))=-y=-G(x)$.
\end{proof}

\begin{theorem}
    Let $H$ be a Hilbert space, and let $F:S_H\rightarrow S_H$ be a non-expansive bijection. Then $F$ is an isometry.
\end{theorem}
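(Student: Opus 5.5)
Since $F$ is a non-expansive bijection, the Lemma gives $F(-x)=-F(x)$ for every $x\in S_H$. The idea is to combine this oddness with the parallelogram law. For $x,y\in S_H$,
\[
\|x-y\|^2+\|x+y\|^2=2\|x\|^2+2\|y\|^2=4,
\]
and likewise, since $F(x),F(y)\in S_H$,
\[
\|F(x)-F(y)\|^2+\|F(x)+F(y)\|^2=4 .
\]
Oddness gives $\|F(x)+F(y)\|=\|F(x)-F(-y)\|$, so non-expansiveness applied to the pair $(x,-y)$ yields
\[
\|F(x)+F(y)\|\le \|x-(-y)\|=\|x+y\|.
\]
Non-expansiveness applied to the pair $(x,y)$ gives $\|F(x)-F(y)\|\le\|x-y\|$.

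Adding the squares of these two inequalities gives
\[
4=\|F(x)-F(y)\|^2+\|F(x)+F(y)\|^2\le \|x-y\|^2+\|x+y\|^2=4.
\]
So equality holds in the sum. Both summands satisfy the inequality separately, hence each must be an equality. In particular $\|F(x)-F(y)\|=\|x-y\|$ for all $x,y\in S_H$, and $F$ is an isometry.

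The only non-routine input is the oddness of $F$, which the Lemma provides. That is where bijectivity is used: the Lemma passes to $F^{-1}$, which is non-contractive. The rest is just the parallelogram law. The same computation also gives strong plasticity: for a non-contractive $F$ the Lemma applies directly, all the inequalities reverse, and the same squeeze shows that $F$ is an isometric embedding.
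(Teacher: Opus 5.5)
Your proof is correct and is the paper's argument: both combine the oddness $F(-x)=-F(x)$ from the Lemma with the parallelogram law. The paper argues by contradiction (a strict drop in $\|x-y\|$ would force a strict increase of $\|F(x)+F(y)\|=\|F(x)-F(-y)\|$ over $\|x+y\|$), whereas you give a direct squeeze, and your remark on strong plasticity matches the paper's corollary.
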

\begin{proof}
    Let $x,y \in S_H$. Then \[||x-y||^2 +||x+y||^2=||F(x)-F(y)||^2+||F(x)+F(y)||^2=4.\]
    The strict inequality $||x-y||^2>||F(x)-F(y)||^2$ thus implies $||x+y||^2 <||F(x)+F(y)||^2$,\\ a contradiction to the supposed non-expansiveness of $F$.
\end{proof}

 The next corollary follows from the proof of the previous lemma and theorem.
\begin{corollary}
  The unit sphere of any Hilbert space is strongly plastic.
\end{corollary}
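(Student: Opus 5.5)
The plan is to reuse the two arguments above, run with the inequality reversed. Let $F:S_H\rightarrow S_H$ be an arbitrary non-contractive map, so that $\|F(x)-F(y)\|\geq \|x-y\|$ for all $x,y\in S_H$. We must show that $F$ is an isometric embedding. No surjectivity is assumed, and none will be needed.

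The first step is to apply the first part of the preceding lemma. Every Hilbert space is strictly convex, and that part of the proof uses only non-contractiveness. Hence $F(-x)=-F(x)$ for every $x\in S_H$.

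The second step is the parallelogram computation from the theorem. Fix $x,y\in S_H$. Since $x,y,F(x),F(y)$ all lie on the unit sphere, the parallelogram law gives
\[
\|x-y\|^2+\|x+y\|^2=4=\|F(x)-F(y)\|^2+\|F(x)+F(y)\|^2 .
\]
By the first step, $F(x)+F(y)=F(x)-F(-y)$. Applying non-contractiveness to the pair $x,-y\in S_H$ therefore yields
\[
\|F(x)+F(y)\|=\|F(x)-F(-y)\|\geq \|x-(-y)\|=\|x+y\|.
\]
Non-contractiveness applied to the pair $x,y$ gives $\|F(x)-F(y)\|\geq\|x-y\|$.

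So both terms on the right of the identity dominate the corresponding terms on the left, while the two sums are equal. This forces equality in each term, in particular $\|F(x)-F(y)\|=\|x-y\|$. Thus $F$ is an isometric embedding, and $S_H$ is strongly plastic.

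The only point needing care is that the antipodality step must not use bijectivity. The first half of the lemma's proof, namely $2=\|x-(-x)\|\leq\|F(x)-F(-x)\|\leq 2$ followed by strict convexity, indeed uses only that $F$ is non-contractive. No real obstacle remains beyond that check.
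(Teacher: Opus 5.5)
Your proof is correct and is exactly the argument the paper intends: the first half of the lemma, which uses only non-contractiveness and strict convexity, gives $F(-x)=-F(x)$, and then the parallelogram identity with the inequalities reversed forces $\|F(x)-F(y)\|=\|x-y\|$. Your explicit check that the antipodality step needs no bijectivity is precisely the point the paper leaves implicit when it says the corollary follows from the proofs of the lemma and the theorem.
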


\section{The space $\ell_1$}\label{l1}

\begin{lemma}\label{l1lem1}
    Let $p,q \in S_{\ell_1}$ be such that for every $x \in S_{\ell_1}$
    \begin{equation*}
        (||x-p||=2)\vee (||x-q||=2).
    \end{equation*}
    Then $p=\pm e_k$ for some $k \in \N$, and $q=-p$.
\end{lemma}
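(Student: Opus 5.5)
The key tool is the characterisation of distance $2$ in $S_{\ell_1}$: for $x,p\in S_{\ell_1}$ we have $\|x-p\|=2$ if and only if $x^n p^n\le 0$ for every $n$. Indeed, $\|x-p\|=\sum_n|x^n-p^n|\le\sum_n(|x^n|+|p^n|)=2$, and equality holds termwise exactly when $x^n$ and $p^n$ never have strictly the same sign. So the hypothesis says: for every $x\in S_{\ell_1}$, either $x$ is sign-opposite to $p$ on every coordinate, or sign-opposite to $q$ on every coordinate. The plan is to test the hypothesis on cleverly chosen points $x$ and derive the structure of $p$ and $q$.

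\emph{Step 1: $p$ has one-point support.} Suppose $\mathtt{supp}(p)$ contains two distinct indices $k\neq m$. Let $\varepsilon_k=\operatorname{sign}p^k$ and $\varepsilon_m=\operatorname{sign}p^m$. Test $x=\varepsilon_k e_k$; it has the same sign as $p$ at $k$, so $\|x-p\|<2$, which forces $\|x-q\|=2$. Hence $q^k\varepsilon_k\le 0$. Similarly, testing $x=\varepsilon_m e_m$ gives $q^m\varepsilon_m\le0$. Now take $x=\tfrac12(\varepsilon_k e_k+\varepsilon_m e_m)$. It agrees in sign with $p$ at $k$, so it must be at distance $2$ from $q$. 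This only reproduces the same sign conditions, so this choice of test points is not enough. Instead, use points sharing a sign with $q$ as well. Since $q\in S_{\ell_1}$, some coordinate $j$ has $q^j\neq0$. Take $x=\tfrac12(\varepsilon_k e_k+\operatorname{sign}(q^j)e_j)$. If $j\ne k$, then $x$ has the same sign as $p$ at $k$ and as $q$ at $j$, so it is at distance less than $2$ from both, a contradiction. Hence every $j\in\mathtt{supp}(q)$ equals $k$. Applying the same argument with $m$ in place of $k$ gives $\mathtt{supp}(q)\subseteq\{k\}\cap\{m\}=\emptyset$, which contradicts $q\neq0$. So $\mathtt{supp}(p)=\{k\}$, and therefore $p=\pm e_k$.

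\emph{Step 2: $q=-p$.} By the same argument, for any $k\in\mathtt{supp}(p)$ we have $\mathtt{supp}(q)\subseteq\{k\}$, so $q=\pm e_k$. If $q=p$, then the test point $x=p$ is at distance $0$ from both $p$ and $q$, a contradiction. Hence $q=-p$.

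The main obstacle is Step 1, namely finding test points that lie at distance less than $2$ from both $p$ and $q$ at once. The resolution is the two-coordinate vector $\tfrac12(\pm e_k\pm e_j)$, which matches the sign of $p$ at one coordinate and the sign of $q$ at another. Note that Step 1 implicitly uses the symmetric version of this argument: for every $k\in\mathtt{supp}(p)$ one gets $\mathtt{supp}(q)\subseteq\{k\}$. That single observation yields both conclusions.
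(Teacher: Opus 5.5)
Your proof is correct and is essentially the paper's argument: the paper also picks test points at distance $<2$ from both $p$ and $q$, and its second step uses exactly your vector $\frac12(\mathtt{sgn}(p^k)e_k+\mathtt{sgn}(q^j)e_j)$. The only difference is that the paper first proves $\mathtt{supp}(p)=\mathtt{supp}(q)$ separately, using the test point $\frac12(q+\mathtt{sgn}(p^k)e_k)$. Your observation that $\mathtt{supp}(q)\subseteq\{k\}$ for every $k\in\mathtt{supp}(p)$ makes that step unnecessary, since it gives both conclusions from the single two-coordinate test point. You should also delete the false start in your Step 1.
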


\begin{proof}
    It is enough to show that the following two statements are true for such $p,q$:
    \begin{equation}
        \mathtt{supp}(p)=\mathtt{supp}(q),
    \end{equation}
    and
    \begin{equation}
        |\mathtt{supp}(p)|=1.
    \end{equation}
    In contradiction to (1) suppose WLOG that $\mathtt{supp}(p) \nsubseteq \mathtt{supp}(q)$, i.e. there is a $k \in \N$ such that $p^k \neq 0$, $q^k =0$, and let $x=\frac12(q+\mathtt{sgn}(p^k)e_k)$. Then $||x-p||<2$, $||x-q||<2$.\\
    Now in contradiction to (2) suppose that $|\mathtt{supp}(p)| \geq 2$, and let $k,j \in \mathtt{supp}(p)=\mathtt{supp}(q)$,  $k\neq j$, $x=\frac12(\mathtt{sgn}(p^k)e_k+\mathtt{sgn}(q^j)e_j)$. Then $||x-p||<2$, $||x-q||<2$.
\end{proof}

\begin{lemma}\label{l1lem2}
    Let $F:S_{\ell_1}\rightarrow S_{\ell_1}$ be a non-expansive bijection. Then for every $x \in \mathtt{ext}(B_{\ell_1})$ $F^{-1}(x)\in \mathtt{ext}(B_{\ell_1})$, and $F^{-1}(-x)=-F^{-1}(x)$.
\end{lemma}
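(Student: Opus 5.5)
Write $G=F^{-1}$; it is a non-contractive bijection of $S_{\ell_1}$. The plan is to use Lemma \ref{l1lem1} on the pair $p=G(x)$, $q=G(-x)$ for $x=\pm e_k$. Both claims then follow at once: Lemma \ref{l1lem1} says that $p=\pm e_j$ for some $j$, which is an extreme point of $B_{\ell_1}$, and that $q=-p$, i.e. $F^{-1}(-x)=-F^{-1}(x)$.

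So the work is to verify the hypothesis of Lemma \ref{l1lem1}: for every $z\in S_{\ell_1}$, either $\|z-p\|=2$ or $\|z-q\|=2$. Since $G$ is surjective, write $z=G(y)$ with $y\in S_{\ell_1}$. Non-contractivity gives
\[
\|z-p\|\ge \|y-x\| \quad\text{and}\quad \|z-q\|\ge\|y+x\|,
\]
and both left-hand sides are at most $2$. It therefore suffices to show that for $x=\pm e_k$ and every $y\in S_{\ell_1}$, at least one of $\|y-x\|$, $\|y+x\|$ equals $2$.

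This is a direct computation. Take $x=e_k$ and write $y=y^ke_k+y'$ with $\|y'\|=1-|y^k|$. Then
\[
\|y-e_k\|=|y^k-1|+1-|y^k| \quad\text{and}\quad \|y+e_k\|=|y^k+1|+1-|y^k|.
\]
If $y^k\le 0$, the first equals $2$; if $y^k\ge 0$, the second equals $2$. The case $x=-e_k$ is symmetric.

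One more point needs checking: $\mathtt{ext}(B_{\ell_1})=\{\pm e_k\}$, so the $x$ in the statement are exactly of this form. This is standard. Any $x$ with at least two nonzero coordinates splits as the midpoint of two distinct points of $B_{\ell_1}$ by shifting mass between those two coordinates.

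There is no real obstacle here. The only subtle point is to apply non-contractivity to $G=F^{-1}$ rather than to $F$, and to use surjectivity of $G$ so that every $z$ is covered.
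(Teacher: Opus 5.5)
Your proof is correct and follows essentially the same route as the paper. You show that every $y\in S_{\ell_1}$ is at distance $2$ from $x$ or from $-x$, transfer this through the non-contractive surjection $F^{-1}$ to get the hypothesis of Lemma \ref{l1lem1} for the pair $F^{-1}(x)$, $F^{-1}(-x)$, and read off both conclusions, while also supplying the short computation $\|y\mp e_k\|=|y^k\mp 1|+1-|y^k|$ that the paper leaves implicit.
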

\begin{proof}
    Recall that $\mathtt{ext}(B_{\ell_1})=\{\pm e_k\}_{k=1}^\infty$, and let $x \in \mathtt{ext}(B_{\ell_1})$.\\ Then $(||y-x||=2)\vee (||y+x||=2)$ for every $y \in S_{\ell_1}$, and thus $||F^{-1}(y)-F^{-1}(x)||=2$ or $||F^{-1}(y)-F^{-1}(-x)||=2$ for every $y \in S_{\ell_1}$. Since $F^{-1}$ is bijective, the last statement is equivalent to $(||y-F^{-1}(x)||=2)\vee (||y-F^{-1}(-x)||=2)$ holding for all $y \in S_{\ell_1}$. Applying Lemma \ref{l1lem1} completes the proof.
\end{proof}

\begin{lemma}\label{l1lem3}
    Let $F:S_{\ell_1}\rightarrow S_{\ell_1}$ be a non-expansive bijection, $N \in \N$.
    Then 
    \begin{equation}
        F\left(\sum_{k=1}^Na_kF^{-1}(e_k) \right)=\sum_{k=1}^Na_ke_k
    \end{equation}
    for every collection $\{a_k\}_{k=1}^{N}$ of non-negative real scalars such that $\sum_{k=1}^Na_k=1$.
\end{lemma}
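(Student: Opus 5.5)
The plan is to use Lemma \ref{l1lem2} to pin down the points $F^{-1}(e_k)$, then compute the distances from $x=\sum_{k=1}^N a_kF^{-1}(e_k)$ to them. Non-expansiveness of $F$ then forces the coordinates of $F(x)$ from below, and the norm constraint $\|F(x)\|=1$ closes the argument.

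\emph{Step 1 (structure of $F^{-1}(e_k)$).} By Lemma \ref{l1lem2}, each $F^{-1}(e_k)$ lies in $\mathtt{ext}(B_{\ell_1})$, so $F^{-1}(e_k)=\varepsilon_k e_{\sigma(k)}$ with $\varepsilon_k\in\{\pm1\}$ and $\sigma(k)\in\N$. The map $\sigma$ is injective. Indeed, if $\sigma(k)=\sigma(j)$ with $k\neq j$, then $F^{-1}(e_j)=\pm F^{-1}(e_k)$. The plus sign contradicts injectivity of $F^{-1}$. The minus sign gives $F^{-1}(e_j)=-F^{-1}(e_k)=F^{-1}(-e_k)$, again by Lemma \ref{l1lem2}, so $e_j=-e_k$, which is impossible. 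Consequently $x=\sum_{k=1}^N a_k\varepsilon_k e_{\sigma(k)}$ has disjointly supported summands, so $\|x\|=\sum a_k=1$ and $x\in S_{\ell_1}$.

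\emph{Step 2 (distance estimates).} For each $j\le N$ we have $\|x-F^{-1}(e_j)\| = \sum_{k\ne j}a_k + (1-a_j) = 2-2a_j$. Set $y=F(x)$. Non-expansiveness gives $\|y-e_j\|\le 2-2a_j$. On the other hand, for any $y\in S_{\ell_1}$,
\[
\|y-e_j\| = 1-|y^j|+|1-y^j| \ge 1-|y^j|+1-y^j \ge 2-2y^j,
\]
where the last step uses $|y^j|\ge y^j$. Combining the two bounds yields $y^j\ge a_j$ for every $j=1,\dots,N$.

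\emph{Step 3 (conclusion).} Summing, $1=\|y\|\ge\sum_{j=1}^N |y^j|\ge\sum_{j=1}^N y^j\ge \sum_{j=1}^N a_j = 1$. Hence all these inequalities are equalities. This gives $y^j=a_j$ for $j\le N$, and all coordinates of $y$ outside $\{1,\dots,N\}$ vanish. Therefore $F(x)=\sum_{k=1}^N a_ke_k$.

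I expect no real obstacle. The only delicate point is Step 1, where the injectivity of $\sigma$ and the disjointness of supports are needed so that $x$ lies on the sphere and the distance computation in Step 2 is valid.
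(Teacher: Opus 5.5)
\textbf{A false inequality in Step 2.} For $y\in S_{\ell_1}$ one has exactly
$\|y-e_j\|=(1-y^j)+(1-|y^j|)=2-y^j-|y^j|$. Since $|y^j|\ge y^j$, this is $\le 2-2y^j$, not $\ge$. For instance, $y=-e_j$ gives $\|y-e_j\|=2<4=2-2y^j$.

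So the bound $\|y-e_j\|\le 2-2a_j$ only yields $y^j+|y^j|\ge 2a_j$, i.e.\ $\max(y^j,0)\ge a_j$. When $a_j>0$ this forces $y^j>0$ and hence $y^j\ge a_j$; this is exactly why the paper uses the estimate only for indices with $a_k\neq 0$. When $a_j=0$ it gives nothing. Your claim ``$y^j\ge a_j$ for every $j$'' amounts, for $a_j=0$, to $y^j\ge 0$. Step 3 uses this, but it is not justified at that stage.

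\textbf{The repair.} Run Step 3 with positive parts:
$1=\|y\|\ge\sum_{j\le N}|y^j|\ge\sum_{j\le N}\max(y^j,0)\ge\sum_{j\le N}a_j=1$.
Equality throughout gives $y^j\ge 0$ and $y^j=a_j$ for $j\le N$, and $y^m=0$ for $m>N$.

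\textbf{Comparison with the paper.} With this fix your proof is correct and takes a genuinely different route. The paper argues by induction on $N$. It writes $x=(1-a_N)\tilde x+a_NF^{-1}(e_N)$ and uses the induction hypothesis for $F(\tilde x)$ together with $\|x-\tilde x\|=2a_N$. This first pins down $y^N=a_N$ and $\sum_{k<N}|y^k|=1-a_N$. Only then does it apply the estimate $\|y-e_k\|\le 2(1-a_k)$ on the indices with $a_k\neq 0$.

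Your direct argument shows the inductive scaffolding is unnecessary: the estimates against the points $F^{-1}(e_j)$ alone already exhaust the unit norm of $y$. Your Step 1 also makes explicit that the $F^{-1}(e_k)$ have disjoint supports, which gives $x\in S_{\ell_1}$ and the distance formula. The paper uses this tacitly.
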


\begin{proof}
     The case $N=1$ is trivial. Now assume that the lemma holds for some $N-1$, $N \geq 2$. We will show that the lemma holds for $N$, thus proving it by induction.\\
     For every $n \in N$ denote $F^{-1}(e_n)$ by $g_n$, and fix some $x=\sum_{k=1}^{N}a_kg_k \in S_{\ell_1}$ such that $a_N\neq0$, $a_k \geq 0$ for every $k \leq N$.
     Denote $(1-a_N)^{-1}\sum_{k=1}^{N-1}a_kg_k \in S_{\ell_1}$ by $\tilde{x}$, so that $x=(1-a_N)\tilde{x}+a_Ng_N$. Then 
     \[||x-g_k||=1-a_k+\sum_{j\neq k}a_j=2(1-a_k)\]
     for every $k \leq N$;
     \[||x-\tilde{x}||=a_N+\sum_{k=1}^{N-1}\left|a_k-\frac{a_k}{1-a_N}\right|=a_N+\left(\frac{1}{1-a_N}-1 \right)\sum_{k=1}^{N-1}a_k=a_N+1-1+a_N=2a_N;\]
     Now let $y=F(x)$, so $||y-e_N||\leq 2(1-a_N)$, $||y-F(\tilde{x})||\leq 2a_N$. Then $y^N\geq 0$, and
     \[||y-e_N||=|y^N-1| +\sum_{k=1}^{N-1}|y^k|=1-y^N+1-y^N=2(1-y^N)\leq 2(1-a_N),\]
     so $y^N \geq a_N$. Then
     \[||y-F(\tilde x)||=\sum_{k=1}^{N-1}\left|y^k-\frac{a_k}{1-a_N}\right|+\sum_{k=N}^\infty|y^k|\leq 2a_N \leq 2\sum_{k=N}^\infty|y^k|,\]
     so
     \[\sum_{k=1}^{N-1}\left|y^k-\frac{a_k}{1-a_N}\right|\leq \sum_{k=N}^\infty|y^k|=1-\sum_{k=1}^{N-1}|y^k|,\]
     \[\sum_{k=1}^{N-1} \left( \left|\frac{a_k}{1-a_N}-y^k\right|+|y^k| \right)\leq 1.\]
     
     On the other hand,
     \[\sum_{k=1}^{N-1} \left( \left|\frac{a_k}{1-a_N}-y^k\right|+|y^k| \right)\geq \sum_{k=1}^{N-1}  \left|\frac{a_k}{1-a_N}-y^k+y^k\right| =(1-a_N)^{-1}\sum_{k=1}^{N-1}|a_k|=1,\]
     so 
     \begin{equation}
        \sum_{k=1}^{N-1} \left( \left|\frac{a_k}{1-a_N}-y^k\right|+|y^k| \right)= 1. 
     \end{equation}
     The inequality $y^N\geq a_N$ means that $\sum_{k=1}^{N-1}|y^k|\leq 1-a_N$, together with $(4)$ it implies that $\sum_{k=1}^{N-1}|\frac{a_k}{1-a_N}-y^k|\geq a_N$. Thus
     \[2a_N\geq ||F(\tilde x)-y||\geq y^N+\sum_{k=1}^{N-1}\left|\frac{a_k}{1-a_n}-y^k\right|\geq 2a_N,\]
     so all the inequalities in the chain above are in fact equalities, $y^N=\sum_{k=1}^{N-1}|\frac{a_k}{1-a_N}-y^k|=a_N$, $\sum_{k=1}^{N-1}|y^k|=1-a_N$, and thus $y=a_Ne_N+\sum_{k=1}^{N-1}y^ke_k$.\\
     Now let $A$ be the set of all such $k\leq N-1$ that $a_k \neq 0$. Then  for every $k \in A$
     \[||y-e_k||=1-y^k+\sum_{j\neq k}|y^j|=1-y^k+1-|y^k|=2(1-y^k)\leq 2(1-a_k),\]
     so $y^k\geq a_k$ for all $k \in A$. Thus
     \[1-a_N=\sum_{k=1}^{N-1}a_k=\sum_{k=1}^{N-1}|y^k|\geq\sum_{k\in A}y^k\geq \sum_{k \in A}a_k=1-a_N,\]
     so $\sum_{k\in A}y^k=\sum_{k\in A}a_k=1-a_N$, and $y=\sum_{k=1}^Na_ke_k$.
    
\end{proof}

\begin{lemma}\label{l1lem4}
    Let $F:S_{\ell_1} \rightarrow S_{\ell_1}$ be a non-expansive bijection, $N \in \N$.\\ Then (3) holds for every collection $\{a_k\}_{k=1}^N$ of real scalars such that $\sum_{k=1}^N|a_k|=1$.
\end{lemma}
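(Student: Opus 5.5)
The natural route is to reduce to Lemma \ref{l1lem3} by a sign change. By Lemma \ref{l1lem2}, each $g_k:=F^{-1}(e_k)$ lies in $\mathtt{ext}(B_{\ell_1})$, and $F^{-1}(-e_k)=-g_k$. So the family $\{\pm g_k\}$ consists of distinct vectors $\pm e_{\sigma(k)}$, since $F^{-1}$ is injective. Let $\varepsilon_k=\mathtt{sgn}(a_k)$ (with $\varepsilon_k=1$ when $a_k=0$), so that $a_k g_k = |a_k|(\varepsilon_k g_k)$ and $\varepsilon_k g_k=F^{-1}(\varepsilon_k e_k)$. The statement then becomes
\[
F\Bigl(\sum_{k=1}^N |a_k| F^{-1}(\varepsilon_k e_k)\Bigr)=\sum_{k=1}^N |a_k|\varepsilon_k e_k .
\]

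To prove this, I would compose $F$ with the linear isometry $T_\varepsilon$ of $\ell_1$ that flips the signs of the coordinates $k\le N$ with $\varepsilon_k=-1$. Set $G=T_\varepsilon\circ F$. Then $G$ is a non-expansive bijection of $S_{\ell_1}$, and $G^{-1}(e_k)=F^{-1}(T_\varepsilon e_k)=F^{-1}(\varepsilon_k e_k)$ for $k\le N$. Applying Lemma \ref{l1lem3} to $G$ with the non-negative coefficients $|a_k|$ gives
\[
T_\varepsilon F\Bigl(\sum_{k=1}^N |a_k| F^{-1}(\varepsilon_k e_k)\Bigr)=\sum_{k=1}^N |a_k| e_k .
\]
Applying $T_\varepsilon^{-1}=T_\varepsilon$ to both sides yields the desired identity. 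It only remains to note that $\sum_k |a_k|F^{-1}(\varepsilon_ke_k)=\sum_k a_k F^{-1}(e_k)$, using the oddness $F^{-1}(-e_k)=-F^{-1}(e_k)$ from Lemma \ref{l1lem2}.

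The step that needs checking is that Lemma \ref{l1lem3} genuinely applies to $G$. Its hypotheses are only that $G$ is a non-expansive bijection of $S_{\ell_1}$, which holds since $T_\varepsilon$ is a surjective linear isometry preserving $S_{\ell_1}$. One should also confirm that the vector $\sum_k |a_k| G^{-1}(e_k)$ has norm $1$. This follows because the $G^{-1}(e_k)$ are $\pm$ distinct basis vectors: their images $e_k$ are distinct, and $G^{-1}(e_k)\neq -G^{-1}(e_j)$ since the latter equals $G^{-1}(-e_j)$. This is the same implicit fact already used in the proof of Lemma \ref{l1lem3}.

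I expect no serious obstacle. The only subtlety is the bookkeeping with zero coefficients and making sure that the oddness and the extreme-point property transfer correctly to $G$, which is immediate from Lemma \ref{l1lem2}.
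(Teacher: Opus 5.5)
Your proof is correct and follows the paper's approach. You rewrite $\sum_k a_kF^{-1}(e_k)$ as $\sum_k|a_k|F^{-1}(\varepsilon_ke_k)$ using the oddness from Lemma \ref{l1lem2}, then apply Lemma \ref{l1lem3} to a sign-flipped copy of $F$. The differences are minor: you use the one-sided composition $G=T_\varepsilon F$ where the paper conjugates, $\tilde T F\tilde T$ (which requires pulling $\tilde T$ through the convex combination), and your convention $\varepsilon_k=1$ for $a_k=0$ handles zero coefficients cleanly, whereas $\mathtt{sgn}(0)=0$ would not give an isometry.
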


\begin{proof}
    Fix some $\{a_k\}_{k=1}^N \in \R^N$ such that $\sum_{k=1}^N|a_k|=1$. Then 
    \[F\left(\sum_{k=1}^Na_kF^{-1}(e_k)\right)=F\left(\sum_{k=1}^N|a_k|F^{-1}(\mathtt{sgn}(a_k)e_k)\right).\]
    Let $T:\ell_1\rightarrow \ell_1$ be a linear isometry defined on the canonical basis by \[Te_k=\begin{cases}\mathtt{sgn}(a_k)e_k, \quad k\leq N,\\
    e_k, \quad k>N,    
    \end{cases}\]
    and denote the restriction of $T$ to $S_{\ell_1}$ by $\tilde T$.\\
    Note that $T^{-1}=T$, and let $G=\tilde T F \tilde T$, so that $F=\tilde T G \tilde T$. Then 
    \begin{equation*}
        \begin{split}
            F\left(\sum_{k=1}^N|a_k|F^{-1}(\mathtt{sgn}(a_k)e_k)\right)  & =\tilde T G \tilde T\left( \sum_{k=1}^N|a_k|\tilde T G^{-1} \tilde T(\mathtt{sgn}(a_k)e_k)\right) \\
            & = \tilde T G \tilde T\left( \sum_{k=1}^N|a_k|\tilde T G^{-1} (e_k)\right) 
            = \tilde T G \tilde T^2\left( \sum_{k=1}^N|a_k| G^{-1} (e_k)\right) \\
            & = \tilde T G \left( \sum_{k=1}^N|a_k| G^{-1} (e_k)\right)=\tilde T \left( \sum_{k=1}^N|a_k|e_k \right) \\
            & = \sum_{k=1}^N|a_k|\mathtt{sgn}(a_k)e_k=\sum_{k=1}^Na_ke_k.
        \end{split}
    \end{equation*}
\end{proof}

\begin{corollary}
    Let $F:S_{\ell_1}\rightarrow S_{\ell_1}$ be a non-expansive bijection. Then $\{F^{-1}(e_k)\}_{k\in \N}$ is a Schauder basis in $\ell_1$.
\end{corollary}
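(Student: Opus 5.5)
Write $g_k=F^{-1}(e_k)$. The plan is to show that $\{g_k\}$ is $1$-equivalent to the unit vector basis of $\ell_1$, i.e.
\[\Big\|\sum_{k=1}^N a_k g_k\Big\| = \sum_{k=1}^N |a_k|\]
for all finitely supported scalars. We then show that its closed linear span is all of $\ell_1$.

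\textbf{Isometric equivalence.} The upper bound $\|\sum a_k g_k\|\le\sum|a_k|$ is the triangle inequality, since $\|g_k\|=1$. For the lower bound we may normalize so that $\sum|a_k|=1$ and set $x=\sum_{k=1}^N a_k g_k$. Lemma~\ref{l1lem4} is stated for points of $S_{\ell_1}$, so it only says something once we know $x\in S_{\ell_1}$, and we must establish that first. Inspecting the proof of Lemma~\ref{l1lem3}, the element $\tilde x$ is tacitly assumed to lie on the sphere. To get this honestly, we use Lemma~\ref{l1lem2}: each $g_k$ is $\pm e_{\sigma(k)}$ for some index $\sigma(k)$. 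Moreover $\sigma$ is injective. Indeed, if $g_k=\pm g_j$ with $k\ne j$, then either injectivity of $F^{-1}$ is violated or $F^{-1}(e_j)=-F^{-1}(e_k)=F^{-1}(-e_k)$, which is impossible. Hence $\{g_k\}$ is a signed subsequence of the unit vector basis. The equality $\|\sum a_k g_k\|=\sum|a_k|$ is then immediate, and in particular the points used in Lemmas~\ref{l1lem3} and~\ref{l1lem4} do lie on $S_{\ell_1}$.

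\textbf{Completeness.} This is the main step: we must show that $\sigma$ is onto $\N$. Suppose some $m\notin\sigma(\N)$ and consider $F(e_m)=y\in S_{\ell_1}$. The idea is that every point of the form $\sum a_k e_k$ with $\sum|a_k|=1$ already has a preimage $\sum a_k g_k$, by Lemma~\ref{l1lem4}. Since such points are dense in $S_{\ell_1}$, pick finitely supported $y_n\to y$ in $S_{\ell_1}$, with preimages $x_n=\sum a^{(n)}_k g_k$. Each $x_n$ is supported on $\sigma(\N)$, so $m\notin\operatorname{supp}(x_n)$, and therefore $\|x_n-e_m\|=2$. On the other hand, non-expansiveness gives only $\|F(x_n)-F(e_m)\|\le\|x_n-e_m\|$, which is the wrong direction. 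So we argue instead via $F^{-1}$ on extreme points. By Lemma~\ref{l1lem2}, $e_m$ is extreme, but we need an extreme point $\pm e_j$ with $F(\pm e_m)=\pm e_j$ or similar.

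A better route is as follows. The coefficients of $x_n$ form the sequence $(a^{(n)}_k)$, which equals $(y_n^{k})$ by Lemma~\ref{l1lem4}, and $\|x_n-x_{n'}\|=\|y_n-y_{n'}\|$ because both sides are $\ell_1$-norms of the same coefficient differences. So $(x_n)$ is Cauchy and converges to some $x\in S_{\ell_1}$ supported in $\sigma(\N)$. By continuity of $F$ (it is non-expansive), $F(x)=\lim y_n=y=F(e_m)$. Injectivity then gives $x=e_m$, contradicting $m\notin\sigma(\N)$. Therefore $\sigma$ is a bijection, $\{g_k\}=\{\pm e_{\sigma(k)}\}$ is a permuted, sign-changed unit vector basis, and hence a Schauder basis of $\ell_1$ (it is unconditional, so any ordering, in particular the order $k=1,2,\dots$, works).

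I expect the most delicate point to be the first one: confirming from Lemma~\ref{l1lem2} and injectivity that the $g_k$ are distinct signed unit vectors. The surjectivity argument is the part where one must avoid using non-expansiveness in the wrong direction; the Cauchy-sequence argument above handles it using only continuity and injectivity of $F$.
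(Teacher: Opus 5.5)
Your proof is correct and follows the route the paper intends. The paper states the corollary without proof, as a consequence of Lemmas~\ref{l1lem2} and~\ref{l1lem4}; your check that the $g_k$ are distinct signed unit vectors, together with your Cauchy-sequence argument (using only continuity and injectivity of $F$), supplies the surjectivity of $\sigma$, which the paper leaves unstated even though it is needed later for $I_F$ to be a bijection.

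Delete the abandoned first attempt in the completeness paragraph. Note also that the same density argument gives $F\circ\tilde T_F=\mathrm{id}$ on all of $S_{\ell_1}$ directly, which would make Lemma~\ref{l1lem5} unnecessary.
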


For every non-expansive bijection $F:S_{\ell_1}\rightarrow S_{\ell_1}$ 
let $T_F:\ell_1\rightarrow \ell_1$ be the linear isometry defined on the canonical basis by \[T_Fe_k=F^{-1}(e_k), \quad k\in \N,\]
and let $I_F=F \tilde{T}_F$, where $\tilde{T}_F$ is the restriction of $T_F$ to $S_{\ell_1}$. Then $I_F:S_{\ell_1}\rightarrow S_{\ell_1}$ is a\\ non-expansive bijection, and $I_F(x)=x$ for every $x \in \mathtt{ext}(B_{\ell_1})$.

\begin{lemma}\label{l1lem5}
    Let $F:S_{\ell_1}\rightarrow S_{\ell_1}$ be a non-expansive bijection. Then $I_F(x)=x$ for every $x \in S_{\ell_1}$. 
\end{lemma}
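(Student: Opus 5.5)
Since $I_F=F\tilde T_F$ is a non-expansive bijection of $S_{\ell_1}$ onto itself, I will apply Lemmas \ref{l1lem2}--\ref{l1lem4} to $G=I_F$ instead of $F$. First I need $G^{-1}(e_k)=e_k$ for every $k$. By definition $T_Fe_k=F^{-1}(e_k)$, so $G(e_k)=F(F^{-1}(e_k))=e_k$, and therefore $G^{-1}(e_k)=e_k$. By Lemma \ref{l1lem2} (or by linearity of $T_F$) also $G(-e_k)=-e_k$.

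Next I take an arbitrary $x\in S_{\ell_1}$ with finite support, say $\mathtt{supp}(x)\subseteq\{1,\dots,N\}$, and write $x=\sum_{k=1}^N x^k e_k=\sum_{k=1}^N x^k G^{-1}(e_k)$ with $\sum_{k=1}^N|x^k|=1$. Lemma \ref{l1lem4} applied to the non-expansive bijection $G$ then gives
\[G(x)=G\Bigl(\sum_{k=1}^N x^kG^{-1}(e_k)\Bigr)=\sum_{k=1}^N x^ke_k=x.\]
So $I_F$ is the identity on the set $D$ of finitely supported elements of $S_{\ell_1}$.

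Finally, $D$ is dense in $S_{\ell_1}$. For $x\in S_{\ell_1}$, truncate to the first $N$ coordinates and normalize: $x_N=P_Nx/\|P_Nx\|$ for large $N$. Then $x_N\in D$ and $x_N\to x$ in norm. Since $I_F$ is non-expansive, it is continuous, so $I_F(x)=\lim I_F(x_N)=\lim x_N=x$.

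The real difficulty sits in Lemmas \ref{l1lem3} and \ref{l1lem4}, which are already established. The only point needing care here is checking that their hypotheses hold for $I_F$. Non-expansiveness and bijectivity hold because $\tilde T_F$ is an isometric bijection of $S_{\ell_1}$: $T_F$ is a surjective linear isometry, since $\{F^{-1}(e_k)\}$ consists of $\pm e_j$ with distinct indices by Lemma \ref{l1lem2} and, by the Corollary, spans $\ell_1$. This is what the paper asserts just before the lemma. The other hypothesis, $I_F^{-1}(e_k)=e_k$, was verified in the first step.
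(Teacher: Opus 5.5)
Your proof is correct, but it takes a different route from the paper. You get $I_F=\mathrm{id}$ on finitely supported vectors from Lemma~\ref{l1lem4}, and then extend to all of $S_{\ell_1}$ by density and continuity.

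The paper does not use Lemmas~\ref{l1lem3}--\ref{l1lem4} here at all. It notes that every $z\in S_{\ell_1}$ satisfies $|z^k|=\bigl(1-\frac{\|z-e_k\|}{2}\bigr)+\bigl(1-\frac{\|z+e_k\|}{2}\bigr)$. Hence $\sum_{p\in\mathtt{ext}(B_{\ell_1})}\bigl(1-\frac{\|z-p\|}{2}\bigr)=1$, a sum of non-negative terms. Since $I_F$ is non-expansive and fixes every $p=\pm e_k$, each term for $z=I_F(x)$ dominates the corresponding term for $z=x$. Both totals equal $1$, so $\|I_F(x)-p\|=\|x-p\|$ for all $p$, and $x^k=\frac12(\|x+e_k\|-\|x-e_k\|)$ then gives $I_F(x)=x$.

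That argument is pointwise, with no approximation step. It uses only that $I_F$ is a non-expansive self-map of $S_{\ell_1}$ fixing $\mathtt{ext}(B_{\ell_1})$, not that it is a bijection. In effect it shows that any such map is the identity.

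Your version is shorter once Lemma~\ref{l1lem4} is available. However, because you apply that lemma to $I_F$, you need $I_F$ to be bijective, so your argument depends on the Corollary (surjectivity of $T_F$), which the paper states without proof. You can drop that dependence. Lemma~\ref{l1lem4} applied to $F$ itself, together with linearity of $T_F$, already gives $I_F\bigl(\sum_{k=1}^N a_ke_k\bigr)=F\bigl(\sum_{k=1}^N a_kF^{-1}(e_k)\bigr)=\sum_{k=1}^N a_ke_k$. Continuity of $I_F$ then only requires $T_F$ to be a linear isometry into $\ell_1$.
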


\begin{proof}
    Let $x \in S_{\ell_1}$, $y=I_F(x)$. Then \[|x^k|=\max(1-\frac{||x-e_k||}2,1-\frac{||x+e_k||}2)=1-\frac{||x-e_k||}2+1 - \frac{||x+e_k||}2,\]
    so \[1=||x||  =\sum_{p\in \mathtt{ext}(B_{\ell_1})}\left(1-\frac{||x-p||}2\right)  =\sum_{p\in \mathtt{ext}(B_{\ell_1})}\left(1-\frac{||y-p||}2\right)=||y||.\] 
    
    The existence of such $p \in \mathtt{ext}(B_{\ell_1})$ that $||I_F(x)-I_F(p)||=||y-p||<||x-p||$ would imply the existence of such $p' \in \mathtt{ext}(B_{\ell_1})$ that $||y-p'||=||I_F(x)-I_F(p')||>||x-p'||$, contradicting the non-expansiveness of $I_F$. Thus $||x-p||=||y-p||$ for every $p \in \mathtt{ext}(B_{\ell_1})$, so \[x^k=\frac12(||x+e_k||-||x-e_k||)=\frac12(||y+e_k||-||y-e_k||)=y^k\] for every $k \in \N$.
\end{proof}

\begin{theorem}\label{l1main}
   Let $F:S_{\ell_1}\rightarrow S_{\ell_1}$ be a non-expansive bijection. Then $F$ is an isometry.
\end{theorem}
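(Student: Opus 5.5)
The plan is to combine Lemma \ref{l1lem5} with the definition of $I_F$. By that lemma, $I_F = F\tilde T_F$ is the identity map of $S_{\ell_1}$. The goal is therefore to show that $\tilde T_F$ is a bijection of $S_{\ell_1}$ onto itself and is an isometry; then $F = \tilde T_F^{-1}$ will be an isometry as well.

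First I will make sure $T_F$ is a linear isometry of $\ell_1$ into $\ell_1$. By Lemma \ref{l1lem2}, every $g_k = F^{-1}(e_k)$ lies in $\mathtt{ext}(B_{\ell_1})$, so $g_k = \varepsilon_k e_{\sigma(k)}$ with $\varepsilon_k \in \{\pm1\}$. Since $F^{-1}$ is injective and $F^{-1}(-e_k) = -g_k$, the map $\sigma$ is injective. Hence $T_F\bigl(\sum a_k e_k\bigr) = \sum a_k \varepsilon_k e_{\sigma(k)}$ preserves the $\ell_1$ norm, and $\tilde T_F$ maps $S_{\ell_1}$ isometrically into $S_{\ell_1}$. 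This is exactly what the paper asserts when it introduces $T_F$, so it can be taken as given.

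Next I need surjectivity of $\tilde T_F$. From $F\circ\tilde T_F = \mathrm{id}$ and the bijectivity of $F$, we get $\tilde T_F = F^{-1}\circ F\circ\tilde T_F = F^{-1}$, so $\tilde T_F$ is a bijection. Consequently $F = \tilde T_F^{-1}$, and for all $x,y\in S_{\ell_1}$,
\[
\|F(x)-F(y)\| = \|\tilde T_F^{-1}x - \tilde T_F^{-1}y\| = \|x-y\|,
\]
since $\tilde T_F$ preserves distances and therefore so does its inverse. Thus $F$ is an isometry.

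There is no substantial obstacle left, because all the work is in Lemmas \ref{l1lem2}--\ref{l1lem5}. The only point needing care is the identity $\tilde T_F = F^{-1}$, which follows directly from the bijectivity of $F$; surjectivity of $\tilde T_F$ comes out of it automatically. As a byproduct, $\sigma$ is a permutation of $\N$.
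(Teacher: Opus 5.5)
Your proposal is correct and is essentially the paper's proof: Lemma \ref{l1lem5} gives $F\tilde T_F=\mathrm{id}$, so $F=\tilde T_F^{-1}$ is an isometry because $\tilde T_F$ is one. Your extra checks, that $T_F$ is isometric via Lemma \ref{l1lem2} and that $\tilde T_F$ is surjective because $\tilde T_F=F^{-1}$, only make explicit what the paper leaves implicit.
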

\begin{proof}
    Lemma \ref{l1lem5} states that the composition of $F$ and $\tilde{T}_F$ is the identity map on $S_{\ell_1}$. Since $\tilde{T}_F$ is an isometry, $F$ has to be an isometry as well.
\end{proof}

\section{The spaces $\ell_\infty$ and $c$}\label{linf}

Throughout this section we denote by $X$ both $\ell_\infty$ and its subspace $c$. As we shall see later, basically the same proof works for both of these spaces. By $E$ we denote the subset $\{\pm e_k\}_{k \in \N}$ of $S_X$.

\begin{lemma}\label{linf1}
    Let $p,q \in S_X$ be such that for every $x \in S_X$
    \begin{equation*}
        (||x-p||\leq1)\vee(||x-q||\leq1).
    \end{equation*}
    Then $p \in E$, $q=-p$.
\end{lemma}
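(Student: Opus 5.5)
The approach is to build explicit test vectors $x\in S_X$ that lie at distance greater than $1$ from both $p$ and $q$. Each test vector will be a sum of at most two signed unit vectors, so it lies in $c$ as well as in $\ell_\infty$, and the same argument covers both spaces.

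The basic observation concerns a single coordinate $n$ and a scalar $t$ with $|t|\le 1$. The inequality $|t-p^n|>1$ holds exactly when $p^n\neq 0$, $t$ has sign opposite to $p^n$, and $|t|>1-|p^n|$. In particular, taking $t=-\mathtt{sgn}(p^n)$ gives $|t-p^n|=1+|p^n|>1$ whenever $p^n\neq0$. Consequently, if $x\in S_X$ satisfies $x^n=-\mathtt{sgn}(p^n)$ for some $n\in\mathtt{supp}(p)$, then $||x-p||>1$, and similarly for $q$.

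The argument then proceeds in four steps.

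\textbf{Step 1.} Both supports are nonempty, since $p,q\neq0$.

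\textbf{Step 2.} We show that there are no $n\in\mathtt{supp}(p)$ and $m\in\mathtt{supp}(q)$ with $n\neq m$. Suppose such $n,m$ exist, and put
\[x=-\mathtt{sgn}(p^n)e_n-\mathtt{sgn}(q^m)e_m .\]
Then $x\in S_X$ and $x$ is eventually zero, so $x\in c$. By the observation, $||x-p||\ge 1+|p^n|>1$ and $||x-q||\ge 1+|q^m|>1$. This contradicts the hypothesis of the lemma.

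\textbf{Step 3.} Step 2 forces $\mathtt{supp}(p)=\mathtt{supp}(q)=\{k\}$ for a single $k\in\N$. Indeed, pick any $n\in\mathtt{supp}(p)$. By Step 2 every element of $\mathtt{supp}(q)$ equals $n$, and $\mathtt{supp}(q)$ is nonempty by Step 1, so $\mathtt{supp}(q)=\{n\}$. The symmetric argument gives $\mathtt{supp}(p)=\{n\}$. Since $||p||=||q||=1$ and each has only one nonzero coordinate, $|p^k|=|q^k|=1$. Hence $p,q\in\{\pm e_k\}\subset E$.

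\textbf{Step 4.} It remains to exclude $q=p$. If $q=p$, take $x=-p\in S_X$. Then $||x-p||=||x-q||=2>1$, again a contradiction. Hence $q=-p$.

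The only subtle point is that in $\ell_\infty$ the supremum defining the norm need not be attained, so one cannot simply pick a coordinate with $|p^n|=1$. The argument above avoids this issue by using only $p^n\neq0$ to get a violation. The unimodular coordinate of $p$ and $q$ is obtained only at the end, from the fact that each support is a single point.
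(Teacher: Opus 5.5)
Your proof is correct and is essentially the paper's argument. Both use the same test vector: it takes the values $-\mathtt{sgn}(p^n)$ and $-\mathtt{sgn}(q^m)$ at two distinct indices $n\in\mathtt{supp}(p)$, $m\in\mathtt{supp}(q)$, which puts it at distance greater than $1$ from both $p$ and $q$. Your version simply writes out the steps the paper calls ``trivial'': the supports are the same singleton, that coordinate has modulus $1$, and $q=p$ is excluded via $x=-p$.
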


\begin{proof}
    Let us demonstrate that $|\mathtt{supp}(p)|=|\mathtt{supp}(q)|=1$. WLOG suppose the contrary, i.e. there are $k,j \in \mathtt{supp}(p)$, $k\neq j$, and let $l \in \mathtt{supp}(q)$, $l\neq k$. Now let $x \in S_X$ be such that $x^k=-\mathtt{sgn}(p^k)$, $x^l=-\mathtt{sgn}(q^l)$. Then $||x-p||\geq 1+|p^k|>1$, $||x-q||\geq 1+|q^l|>1$, and that is a contradiction. The rest of the proof is trivial.
\end{proof}

\begin{lemma}\label{linf2}
    Let $F: S_X\rightarrow S_X$ be a non-expansive bijection, $x \in E$. Then $F(x)\in E$, $F(-x)=-F(x)$.
\end{lemma}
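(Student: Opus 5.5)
Since $x\in E$, say $x=\pm e_k$, every $y\in S_X$ satisfies $\|y-x\|\le 1$ or $\|y+x\|\le 1$. Indeed, $|y^k|\le 1$, and $y^k$ has the same sign as $x^k$ or the opposite one (or is $0$). If $y^k x^k\ge 0$, then $|y^k-x^k|\le 1$, while every other coordinate satisfies $|y^j|\le 1$, so $\|y-x\|\le 1$. Otherwise $\|y+x\|\le 1$ by the same reasoning. Note that here, unlike in Lemma \ref{l1lem2}, the relevant condition is an upper bound on distances, which is preserved by $F$ in the forward direction rather than by $F^{-1}$.

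Now apply $F$. Since $F$ is non-expansive, $\|y-x\|\le 1$ implies $\|F(y)-F(x)\|\le 1$, and likewise $\|y+x\|\le1$ implies $\|F(y)-F(-x)\|\le 1$. Hence every $y\in S_X$ satisfies
\begin{equation*}
(\|F(y)-F(x)\|\le 1)\vee(\|F(y)-F(-x)\|\le 1).
\end{equation*}
Because $F$ is surjective, $F(y)$ ranges over all of $S_X$. Therefore the pair $p=F(x)$, $q=F(-x)$ satisfies the hypothesis of Lemma \ref{linf1}, and that lemma yields $F(x)\in E$ and $F(-x)=-F(x)$.

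There is no real obstacle. The only point to check is that Lemma \ref{linf1} applies equally to $X=\ell_\infty$ and $X=c$. This holds because the verification in the first step uses only the coordinatewise sup norm and the fact that $e_k\in c$. Bijectivity is used only through surjectivity of $F$, which is what lets us quantify over all points of $S_X$.
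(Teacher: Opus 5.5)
Your proof is correct and follows the paper's argument exactly. You take the dichotomy $\|y-x\|\le 1$ or $\|y+x\|\le 1$ for all $y\in S_X$, push it forward by non-expansiveness, use surjectivity to make it hold for every point of $S_X$, and conclude with Lemma~\ref{linf1}; the only difference is that you verify the dichotomy coordinatewise, whereas the paper simply asserts it.
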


\begin{proof}
    For every $y \in S_X$ either $||y-x||\leq 1$ or $||y+x|| \leq 1$, and the non-expansiveness of $F$ implies that $\mathtt{min}(||F(y)-F(x)||,||F(y)-F(-x)||)\leq 1$ for every $y \in S_X$. F is bijective, so $\mathtt{min}(||y-F(x)||,||y-F(-x)||)\leq 1$ for every $y \in S_X$. Now all that remains is to apply the previous lemma.
\end{proof}

Recall that \[\mathtt{ext}(B_{\ell_\infty})=\{(x^k):x^k \in \{-1,1\}\},\]
and $\mathtt{ext}(B_c)=c \cap \mathtt{ext}(B_{\ell_\infty})$, i.e. $(x^k)\in\mathtt{ext}(B_c)$ if and only if $x^k\in\{-1,1\}$ for every $k\in \N$, and there is an $N \in \N$ such that $x^k=x^N$ for all $k \geq N$.

\begin{lemma}\label{linf3}
    Let $p,q \in S_X$ be such that for every $x \in S_X$
    \begin{equation*}
        (||x-p||=2)\vee(||x-q||=2).
    \end{equation*}
    Then $p \in \mathtt{ext}(B_X)$, $q=-p$.
\end{lemma}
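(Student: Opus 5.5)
We argue by contradiction, mirroring the proofs of Lemma \ref{l1lem1} and Lemma \ref{linf1}. Each time we exhibit a single $x \in S_X$ with $\|x-p\|<2$ and $\|x-q\|<2$. To stay inside $c$ we only ever build $x$ with finitely many coordinates different from a constant tail (in fact from $0$ or from a fixed sign pattern copied from $p$ or $q$), so the same argument serves both $X=\ell_\infty$ and $X=c$.

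\textbf{Step 1: $q=-p$.} Suppose $q\neq -p$, so $\|p+q\|>0$. Try $x=\frac{p-q}{\|p-q\|}$ when $p\neq q$, and $x=p$ when $p=q$. If $p=q$, then $\|x-p\|=0<2$ and we are done. In general, rescaling $p-q$ is awkward because coordinates are not controlled. A cleaner choice is the truncation
\[
x^k=\max\bigl(-1+\delta,\ \min(1-\delta,\ \tfrac{p^k+q^k}{2}\cdot t)\bigr),
\]
adjusted so that $\|x\|=1$. A simpler route is as follows. The condition $\|x-p\|=2$ forces $\sup_k|x^k-p^k|=2$, hence there are coordinates $k$ with $p^k$ near $\pm1$ and $x^k$ near $\mp1$. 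So for each $x$, either $x$ is ``nearly antipodal'' to $p$ on some coordinates, or nearly antipodal to $q$. Take $x=\frac{p+q}{\|p+q\|}$ when $p+q\neq 0$. On any coordinate, $|x^k-p^k|\le |x^k|+|p^k|$, and equality $2$ in the limit requires $x^k\to-\mathrm{sgn}$ while $p^k\to\pm1$. But $x^k$ has the sign of $p^k+q^k$, and if $p^k\approx 1$ then $p^k+q^k\ge 0$ up to $q^k\ge -1$. The only way $x^k\approx -1$ together with $p^k\approx 1$ is $q^k\approx -1$ with $p^k+q^k$ dominating negatively, which is impossible. I expect this sign bookkeeping to need care: the sup need not be attained, so I would track the quantities $\limsup$ along sequences of coordinates and show $\sup_k|x^k-p^k|<2$ via a uniform bound such as $|x^k-p^k|\le 2-\min(\|p+q\|^{-1},1)\cdot(\text{something})$. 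This uniform estimate is the main obstacle. If it resists, I fall back to a coordinatewise construction, $x^k=\frac12(p^k+q^k)/\|\cdot\|$, capped away from $\pm1$ except at one coordinate where $p+q$ is large.

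\textbf{Step 2: $p\in\mathtt{ext}(B_X)$, i.e.\ $|p^k|=1$ for all $k$.} Now $q=-p$. Suppose $|p^j|<1$ for some $j$. Let $x$ have $x^j=1$ (or $-1$, chosen freely) and $x^k=\lambda p^k$ for $k\neq j$, with $0<\lambda<1$ such that $\|x\|=1$ still holds thanks to coordinate $j$. This $x$ lies in $c$ whenever $p$ does. Then for $k\neq j$ we get $|x^k-p^k|=(1-\lambda)|p^k|\le 1-\lambda<2$ and $|x^k+p^k|=(1+\lambda)|p^k|\le 1+\lambda<2$, uniformly in $k$. At coordinate $j$ we get $|1\mp p^j|\le 1+|p^j|<2$. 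Hence $\|x-p\|<2$ and $\|x+p\|<2$, a contradiction. So every $|p^k|=1$, and since $p\in X$ this gives $p\in\mathtt{ext}(B_X)$ by the description of extreme points recalled above.

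Step 2 also suggests a simpler Step 1. Apply the same scaled construction $x=\lambda p$-like to the pair: take $x$ with $x^j=\pm1$ at a good coordinate and $x^k=\lambda\frac{p^k-q^k}{2}$ elsewhere. Then $|x^k-p^k|$ and $|x^k-q^k|$ are bounded by $\frac{1+\lambda}{2}(|p^k|+|q^k|)$-type quantities, which stay below $2$ except where $p^k\approx-q^k\approx\pm1$. Choosing $j$ with $|p^j+q^j|$ bounded below then handles both $p$ and $q$ at once. I would attempt this unified version first.
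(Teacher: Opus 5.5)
Your Step 2 is correct, but it uses $q=-p$, and Step 1 never actually proves this. What you offer there is a sequence of candidate test vectors, none carried to completion. The vector $x=(p-q)/\|p-q\|$ genuinely fails: for $p=e_1$, $q=e_2$ it equals $e_1-e_2$, and $\|x-q\|=2$. For $x=(p+q)/\|p+q\|$ you explicitly leave the uniform estimate open as ``the main obstacle''. In the closing ``unified version'' the sign of $x^j$ is left unspecified, although it matters: if $p^j=1$, $q^j=0$, the choice $x^j=-1$ gives $\|x-p\|=2$. Your worry there about an exceptional set where $p^k\approx -q^k\approx\pm1$ is unfounded, since the off-$j$ coordinates of $x-p$ and $x-q$ are uniformly at most $1+\lambda<2$ in modulus. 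So, as written, the proposal does not establish the lemma.

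The missing piece is short. Take $x=(p+q)/s$ with $s=\|p+q\|>0$, and suppose $|x^k-p^k|>2-\delta$ for some $\delta\in(0,1)$. Then $x^k$ and $p^k$ have opposite signs and moduli $>1-\delta$; say $p^k>1-\delta$ and $x^k<-(1-\delta)$. Then $-1\le q^k<-s(1-\delta)-p^k<-1+\delta-s(1-\delta)$, which forces $\delta>s/(1+s)$. Hence $\|x-p\|\le 2-s/(1+s)<2$, and symmetrically $\|x-q\|<2$. Simpler still, take $\lambda=0$ in your unified version: choose $j$ with $p^j+q^j\neq 0$ and put $x=\sigma e_j$ with $\sigma=\mathtt{sgn}(p^j+q^j)$. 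Neither $p^j$ nor $q^j$ can equal $-\sigma$, so $|\sigma-p^j|,|\sigma-q^j|<2$, and all other coordinates of $x-p$, $x-q$ have modulus at most $1$.

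The paper sidesteps Step 1 entirely by reversing your order. If $|p^k|<1$ then $\|\pm e_k-p\|<2$, hence $\|e_k-q\|=\|-e_k-q\|=2$, forcing both $q^k=-1$ and $q^k=1$. So $|p^k|=|q^k|=1$ for all $k$ with no prior information on $q$. Then $p^j=q^j$ is ruled out because $\|p^je_j-p\|=\|p^je_j-q\|=1$. Your Step 2 vector yields $|p^j|=1$ in exactly the same way without assuming $q=-p$.
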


\begin{proof}
    If there is a $k \in \N$ such that $|p^k|<1$, then $||e_k-p||<2$, $||-e_k-p||<2$, so $||e_k-q||=||-e_k-q||=2$, and that is impossible in $S_X$. Thus $|p^k|=|q^k|=1$ for every $k \in \N$, and there is no $j \in \N$ such that $p^j=q^j$, since in the opposite case both distances $||p^je_j-p||$ and $||p^je_j-q||$ would have been equal to $1$.
\end{proof}

\begin{corollary}
Let $F:S_X\rightarrow S_X$ be a non-expansive bijection, $x \in \mathtt{ext}(B_X)$.\\ Then $F^{-1}(x) \in \mathtt{ext}(B_X)$, $F^{-1}(-x)=-F^{-1}(x)$.
\end{corollary}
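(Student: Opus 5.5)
This follows the same scheme as Lemma \ref{l1lem2}, with Lemma \ref{linf3} playing the role of Lemma \ref{l1lem1}. Fix $x \in \mathtt{ext}(B_X)$. Since every coordinate of $x$ equals $\pm 1$, the first step is to show that for every $y \in S_X$
\[
(||y-x||=2)\vee(||y+x||=2).
\]
Because $||y||=1$, there are coordinates $k$ with $|y^k|$ arbitrarily close to $1$. Along such coordinates, at least one of the signs $\mathtt{sgn}(y^k)=-x^k$ or $\mathtt{sgn}(y^k)=x^k$ must occur infinitely often, or at least along a subsequence where $|y^k|\to 1$. In the first case $|y^k-x^k|=1+|y^k|$ approaches $2$, so $||y-x||=2$. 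In the second case $|y^k+x^k|$ approaches $2$, so $||y+x||=2$. The point where care is needed is when the supremum $||y||=1$ is attained at a single coordinate $k$; this only simplifies matters, since then $|y^k\mp x^k|=2$ for the appropriate sign. Passing to a subsequence where the sign relation between $y^k$ and $x^k$ is constant handles the general case.

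Next, take an arbitrary $z\in S_X$ and apply the previous step to $y=F(z)$. This gives $||F(z)-x||=2$ or $||F(z)+x||=2$. Writing $x=F(F^{-1}(x))$ and $-x=F(F^{-1}(-x))$, non-expansiveness of $F$ yields
\[
||F(z)-F(F^{-1}(x))||\leq ||z-F^{-1}(x)||\leq 2,
\]
so the equality $||F(z)-x||=2$ forces $||z-F^{-1}(x)||=2$. Likewise, $||F(z)+x||=2$ forces $||z-F^{-1}(-x)||=2$. Since $F$ is surjective, every $z\in S_X$ arises in this way, so for all $z \in S_X$
\[
(||z-F^{-1}(x)||=2)\vee(||z-F^{-1}(-x)||=2).
\]

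Finally, apply Lemma \ref{linf3} with $p=F^{-1}(x)$ and $q=F^{-1}(-x)$. It gives $F^{-1}(x)\in\mathtt{ext}(B_X)$ and $F^{-1}(-x)=-F^{-1}(x)$, which is the claim. The argument is identical for $X=c$, because $\mathtt{ext}(B_c)\subset\mathtt{ext}(B_{\ell_\infty})$ and the first step only used that $x$ has all coordinates $\pm1$. The only potential obstacle is the first step, namely verifying the dichotomy $(||y-x||=2)\vee(||y+x||=2)$ when the norm of $y$ is approached but not attained; the subsequence argument above resolves it.
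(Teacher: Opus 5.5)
Your proof is correct and is essentially the argument the paper intends: the paper gives no separate proof, and the corollary is the analogue of Lemma \ref{l1lem2}, with Lemma \ref{linf3} in place of Lemma \ref{l1lem1}. Your first step can be shortened, since $\max(|y^k-x^k|,|y^k+x^k|)=1+|y^k|$ for each $k$ gives $\max(\|y-x\|,\|y+x\|)=\sup_k(1+|y^k|)=2$ directly, with no subsequence argument.
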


For every $a \in \mathtt{ext}(B_X)$ let $T_a:S_X \rightarrow S_X$ be the isometry of pointwise multiplication by $a$:
\[T_a(x^k)=(a^kx^k), \quad (x^k) \in S_X.\]
Now for every non-expansive bijection $F:S_X \rightarrow S_X$ let $F_1={ T_{F^{-1}(e)}}F$, where $e=(1,1,1,...)$. The map $F_1:S_X \rightarrow S_X$  is then a non-expansive bijection, $F_1(e)=e$, $F_1(-e)=-e$.\\
For every $k \in \N$ let $h_k=e-2e_k$, and denote the set $\{h_k\}_{k\in \N}$ by $H$.

\begin{lemma}\label{linf4}
    Let $F: S_X\rightarrow S_X$ be a non-expansive bijection, $x \in H$. Then $F_1^{-1}(x)\in H$.
\end{lemma}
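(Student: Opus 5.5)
By the Corollary following Lemma \ref{linf3}, applied to the non-expansive bijection $F_1$, the point $a:=F_1^{-1}(h_k)$ lies in $\mathtt{ext}(B_X)$. Since $F_1(\pm e)=\pm e$, $F_1$ is injective and $h_k\neq\pm e$, we get $a\neq\pm e$. Let $D=\{j: a^j=-1\}$, the set of coordinates where $a$ and $e$ differ. Then $D\neq\emptyset$, and $\N\setminus D\neq\emptyset$ because $a\neq -e$. The goal is to show $|D|=1$, which gives $a=h_j$ for the unique $j\in D$.

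The plan is to find a metric property that singles out $H$ inside $\mathtt{ext}(B_X)\setminus\{\pm e\}$ and is transported correctly by $F_1$ or $F_1^{-1}$. Distances between extreme points are always $0$ or $2$, so they carry no information. The information has to come from intermediate points of $S_X$, namely the points $\pm e_j$ (which $F_1$ permutes, by Lemma \ref{linf2}) and points such as $e-e_j$. The elementary facts are as follows. For $a\in\mathtt{ext}(B_X)$ we have $\|a-se_j\|=1$ if $a^j=s$ and $\|a-se_j\|=2$ otherwise. Moreover $\|x-a\|\le 1$ and $\|x-e\|\le1$ together force $x^j=0$ for $j\in D$ and $x^j\in[0,1]$ off $D$. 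So the set
\[M(a)=\{x\in S_X:\ \|x-a\|\le1,\ \|x-e\|\le 1\}\]
consists of the norm-one points supported off $D$ with non-negative coordinates. Non-expansiveness of $F_1$ and $F_1(e)=e$ give $F_1(M(a))\subset M(h_k)$. The set $M(h_k)$ consists of the points with $x^k=0$ and all other coordinates in $[0,1]$.

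I would then argue by contradiction. Suppose $j\neq l$ both lie in $D$. Consider $e_j$ and the point $u=e-e_j\in S_X$, which satisfies $\|u-e\|=1$. By Lemma \ref{linf2}, $F_1^{-1}(e_j)$ and $F_1^{-1}(-e_j)$ are opposite elements of $E$. Next apply Lemma \ref{linf1}-type reasoning to the pair $h_k,e$: every $x\in S_X$ satisfies $\min(\|x-h_k\|,\|x-e\|)\le1$ exactly when $x^k\le 0$ or $x^i\ge0$ for all $i\neq k$, approximately. Pulling this back through the bijection shows that every $y\in S_X$ with $\min(\|y-a\|,\|y-e\|)>1$ has $\min(\|F_1(y)-h_k\|,\|F_1(y)-e\|)>1$. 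The hope is that for $|D|\ge2$ there are too many such $y$. For example, $y=-e_m$ with $m\notin D$ has distance $2$ from both $a$ and $e$. I would compare these with the corresponding points for $h_k$ (the $-e_m$ with $m\neq k$), using that $F_1$ restricted to $E$ is a sign-compatible bijection of $E$. The aim is to derive a counting or coordinate mismatch, by showing that $F_1^{-1}(\pm e_k)$ must be $\pm e_j$ for both $j$ and $l$ in $D$.

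The main obstacle is exactly this step. Non-expansiveness only transports the relations ``distance $\le 1$'' forward and ``distance $=2$'' backward, while extreme points cannot be distinguished from one another metrically. So I must find a witness configuration involving elements of $E$ whose images are controlled by Lemma \ref{linf2}, and which detects whether $a$ and $e$ differ in one coordinate or in several. My first attempt would use the triples $(a,e,\pm e_j)$: $h_k$ is the only element of $\mathtt{ext}(B_X)\setminus\{\pm e\}$ lying at distance $1$ from $-e_k$ while being at distance $1$ from $e_i$ for all $i\neq k$. I would try to show, using bijectivity of $F_1$ on $E$, that $a$ inherits this property.
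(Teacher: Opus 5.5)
Your setup is correct: $a=F_1^{-1}(h_k)\in\mathtt{ext}(B_X)$, $a\neq\pm e$, and the reduction to $|D|=1$. However, the proposal does not prove the lemma. The decisive step, $|D|\le 1$, is left as ``the main obstacle,'' and the route you sketch toward it uses facts that are not available here.

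\begin{itemize}
\item Lemma \ref{linf2} gives only $F_1(E)\subset E$ and $F_1(-u)=-F_1(u)$ for $u\in E$. It says nothing about $F_1^{-1}(\pm e_j)$.
\item The statement that $F_1$ maps $E$ \emph{onto} $E$ (your ``$F_1$ permutes $\pm e_j$'' and ``$F_1$ restricted to $E$ is a bijection of $E$'') is the Corollary after Lemma \ref{linf5}. That proof uses the present lemma, so invoking it here is circular.
\item Making $a$ ``inherit'' the property that characterizes $h_k$ (distance $1$ to $-e_k$ and to every $e_i$, $i\neq k$) would require transporting ``distance $1$'' \emph{backward} through $F_1$. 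Non-expansiveness does not provide this.
\item Your pull-back claim is reversed. Non-expansiveness gives $\min(\|F_1(y)-h_k\|,\|F_1(y)-e\|)>1\Rightarrow\min(\|y-a\|,\|y-e\|)>1$, not the converse.
\item Your description of $\{x:\min(\|x-h_k\|,\|x-e\|)\le 1\}$ is also inaccurate. It is exactly $\{x: x^i\ge 0 \text{ for all } i\neq k\}$.
\end{itemize}

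The missing idea is short and uses only forward transport of ``distance $\le 1$,'' which you had already identified.
\begin{itemize}
\item For $j\in D$, $\|-e_j-(-e)\|=1$. Since $F_1(-e)=-e$, this gives $\|F_1(-e_j)+e\|\le 1$.
\item $F_1(-e_j)\in E$ by Lemma \ref{linf2}, and $\|e_m+e\|=2$, so $F_1(-e_j)=-e_m$ for some $m$.
\item Since $a^j=-1$, $\|a+e_j\|=1$. Hence $\|h_k+e_m\|=\|F_1(a)-F_1(-e_j)\|\le 1$, which forces $m=k$.
\item So $F_1(-e_j)=-e_k$ for every $j\in D$, and injectivity of $F_1$ gives $|D|\le 1$.
\end{itemize}
This is exactly the ``coordinate mismatch'' you were aiming for. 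It needs neither surjectivity on $E$ nor any backward transport. It is the paper's argument, phrased there via the set $W$ of $y\in S_X$ with at most one $k$ such that $\|y+e_k\|\le 1$, together with $W\cap\mathtt{ext}(B_X)=\{e\}\cup H$.
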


\begin{proof}
    Denote by $W$ the set of all vectors $y \in S_X$ such that
    \[|\{k\in \N:y^k \leq 0\}|=|\{k \in \N:||y+e_k|| \leq 1\}|\leq 1.\]
    Then $W \cap \mathtt{ext}(B_X)= \{e\} \cup H$, so it is enough to show that $F_1^{-1}(x) \in W$. Now assume the contrary, i.e. there are $i,j \in \N$ such that $i \neq j$, $||F_1^{-1}(x)+e_i||\leq 1$, $||F_1^{-1}(x)+e_j||\leq 1$. The fact that $-e$ is a fixed point of $F_1$ implies that $F_1(-e_i)=-e_k$, $F_1(-e_j)=-e_l$ for some $k,l \in \N$. Then $||x+e_k||\leq 1$, $||x+e_l||\leq1$, contradicting the assumption that $x \in H$.
\end{proof}

\begin{lemma}\label{linf5}
     Let $F: S_X\rightarrow S_X$ be a non-expansive bijection. Then $F_1$ maps the set $\{e_k\}_{k \in \N}$ bijectively onto itself.
\end{lemma}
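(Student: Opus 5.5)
The plan is to show three things in turn: $F_1$ maps $E$ into $E$ and is odd there; $F_1$ sends each $e_k$ to some $e_j$ rather than to some $-e_j$; and every $e_j$ is attained. Injectivity is free, since $F_1$ is a bijection of $S_X$.

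\emph{Step 1: $F_1$ preserves $E$ and is odd on it.} By Lemma \ref{linf2}, $F(E)\subset E$ and $F(-x)=-F(x)$ for $x\in E$. With $a=F^{-1}(e)\in \mathtt{ext}(B_X)$ (by the preceding Corollary), the map $T_a$ satisfies $T_a(\pm e_k)=\pm a^k e_k\in E$ and is linear. Hence $F_1=T_aF$ maps $E$ into $E$, and $F_1(-x)=-F_1(x)$ for $x\in E$.

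\emph{Step 2: no sign flips.} We have $\|e-e_k\|=1$, while $\|e+e_j\|=2$ for every $j$. Since $F_1(e)=e$ and $F_1$ is non-expansive, $\|e-F_1(e_k)\|\le 1$. By Step 1, $F_1(e_k)\in E$, so it cannot be any $-e_j$; hence $F_1(e_k)=e_j$ for some $j$. Thus $F_1$ maps $\{e_k\}$ injectively into $\{e_k\}$.

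\emph{Step 3: surjectivity.} This is the only step needing an idea; the plan is to use Lemma \ref{linf4}. First compute the distances from $h_j=e-2e_k|_{k=j}$ to the points of $E$:
\begin{itemize}
\item $\|h_j+e_j\|=1$ and $\|h_j-e_j\|=2$;
\item for $i\neq j$, $\|h_j-e_i\|=1$ and $\|h_j+e_i\|=2$.
\end{itemize}
So $-e_j$ is the only element of $-\{e_i\}$ within distance $1$ of $h_j$.

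Now fix $j$. By Lemma \ref{linf4}, $F_1^{-1}(h_j)=h_m$ for some $m$. Since $\|h_m-(-e_m)\|=1$, non-expansiveness gives $\|h_j-F_1(-e_m)\|\le 1$. By Steps 1 and 2, $F_1(-e_m)=-F_1(e_m)=-e_i$ for some $i$. The distance computation then forces $i=j$, so $F_1(e_m)=e_j$.

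Hence every $e_j$ lies in the image, and $F_1$ restricted to $\{e_k\}$ is a bijection onto $\{e_k\}$. The argument is identical for $\ell_\infty$ and $c$, since only the vectors $e$, $e_k$ and $h_k$ (all in $c$) are used.
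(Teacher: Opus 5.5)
Your proof is correct and takes essentially the paper's route: the key input is Lemma~\ref{linf4} together with the pattern of distances between the $h_j$ and the points $\pm e_k$. The paper uses the distance-$2$ pair $(e_{\sigma(k)},h_{\sigma(k)})$ and non-contractiveness of $F_1^{-1}$ to get $F_1(h_k)=h_{\sigma(k)}$, then invokes $F_1^{-1}(H)\subseteq H$. You instead use the distance-$1$ pair $(h_m,-e_m)$ with non-expansiveness of $F_1$ and oddness on $E$. You also justify, via $F_1(e)=e$, that $F_1(e_k)$ is some $e_j$ rather than some $-e_j$, which the paper leaves implicit.
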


\begin{proof}
    Note that there is an injective map $\sigma:\N \rightarrow \N$ such that $F_1(e_k)=e_{\sigma(k)}$ for every $k \in \N$. We will show that $\sigma$ is in fact a bijection. Now fix some $k \in \N$. Then $||e_{\sigma(k)}-h_{\sigma(k)}||=2$, and so $||F_1^{-1}(e_{\sigma(k)})-F_1^{-1}(h_{\sigma(k)})||=||e_k-F_1^{-1}(h_{\sigma(k)})||=2$. Lemma \ref{linf4} states that $F_1^{-1}(h_{\sigma(k)})=h_j$ for some $j\in \N$. Note that $||e_k-h_j||=1$ for all $j \neq k$, so $j=k$. Thus $F_1^{-1}(\{h_{\sigma(k)}\}_{k\in \N})=H$, so $\sigma(\N)=\N$.
\end{proof}

\begin{corollary}
    Every non-expansive bijection $F: S_X\rightarrow S_X$ maps $E$ bijectively onto itself.
\end{corollary}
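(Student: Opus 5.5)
We have to prove the last corollary: every non-expansive bijection $F:S_X\to S_X$ maps $E$ bijectively onto itself.

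By Lemma \ref{linf2}, $F(E)\subseteq E$ and $F(-x)=-F(x)$ for $x\in E$. Since $F$ is injective, $F|_E$ is injective. It therefore remains to show that $E\subseteq F(E)$, and the plan is to transfer the surjectivity statement of Lemma \ref{linf5} from $F_1$ back to $F$.

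Put $a=F^{-1}(e)$. By the corollary following Lemma \ref{linf3}, $a\in\mathtt{ext}(B_X)$, so every coordinate satisfies $a^k\in\{-1,1\}$. Hence $T_a$ is an involutive isometry of $S_X$ that maps $E$ onto itself: $T_a(\pm e_k)=\pm a^ke_k$. Since $F_1=T_aF$, we have $F=T_a^{-1}F_1=T_aF_1$.

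Next, Lemma \ref{linf5} says $F_1(\{e_k\}_{k\in\N})=\{e_k\}_{k\in\N}$. Since $F_1$ is a non-expansive bijection, Lemma \ref{linf2} applied to $F_1$ gives $F_1(-e_k)=-F_1(e_k)$. Consequently $F_1(\{-e_k\}_{k\in\N})=\{-e_k\}_{k\in\N}$, and so $F_1(E)=E$.

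Finally, $F(E)=T_a(F_1(E))=T_a(E)=E$. Together with the injectivity of $F|_E$, this shows that $F$ maps $E$ bijectively onto $E$.

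The only point needing care is confirming that $T_a$ preserves $E$. This depends on $a$ being an extreme point, which is guaranteed by the corollary to Lemma \ref{linf3}; everything else is bookkeeping.
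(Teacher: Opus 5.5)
Your proof is correct and is exactly the argument the paper leaves implicit: Lemma \ref{linf5} together with the oddness on $E$ from Lemma \ref{linf2} gives $F_1(E)=E$, and since $F$ differs from $F_1$ only by the $E$-preserving isometry $T_a$ with $a=F^{-1}(e)$, the same holds for $F$. One small remark: Lemma \ref{linf5} needs $F_1(\pm e)=\pm e$, which holds for $F\circ T_a$ but fails in general for the literal $T_a\circ F$, so the paper's $F_1$ should be read as $F\circ T_a$ and then $F=F_1\circ T_a$ rather than $T_a\circ F_1$; your conclusion is unaffected because $T_a$ maps $E$ onto $E$ whichever side it is composed on.
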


Lemma \ref{linf5} states that for every non-expansive bijection $F: S_X\rightarrow S_X$ there is a bijection\\ $\sigma_F: \N \rightarrow \N$ such that $F_1(e_k)=e_{\sigma_F(k)}$ for every $k \in \N$. Now for every non-expansive bijection $F: S_X\rightarrow S_X$ define the bijective isometry $P_F:S_X\rightarrow S_X$ by
    \[P_F(x^k)=(x^{{\sigma_F}^{-1}(k)}), \quad (x^k) \in S_X,\]
    and let $I_F=P_FF_1$. Then $I_F$ is a non-expansive bijection, and $I_F(x)=x$ for every $x \in E$.

\begin{lemma}\label{linf6}
    Let $F: S_X\rightarrow S_X$ be a non-expansive bijection, $x \in \mathtt{ext}(B_X)$. Then $x$ is a fixed point of $I_F$.
\end{lemma}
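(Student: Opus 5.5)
The plan is to exploit two facts already available: $I_F$ is a non-expansive bijection of $S_X$ that fixes every point of $E$, and the Corollary following Lemma \ref{linf3} applies to \emph{any} non-expansive bijection of $S_X$, in particular to $I_F$. So, given $x\in\mathtt{ext}(B_X)$, I will put $z=I_F^{-1}(x)$. By that Corollary (applied to $I_F$), $z\in\mathtt{ext}(B_X)$, i.e. $z^k\in\{-1,1\}$ for every $k$. It then suffices to show $z=x$, since this gives $I_F(x)=I_F(z)=x$.

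To compare $z$ and $x$ coordinatewise I will use the distances to the points of $E$. For any $w\in\mathtt{ext}(B_X)$ and any $k$, one has $\|w-e_k\|=\max(|w^k-1|,\sup_{j\ne k}|w^j|)$. This equals $1$ if $w^k=1$ and $2$ if $w^k=-1$. Symmetrically, $\|w+e_k\|=1$ if $w^k=-1$ and $2$ if $w^k=1$.

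Fix $k\in\N$. Suppose first that $z^k=1$. Since $I_F(e_k)=e_k$ and $I_F$ is non-expansive,
\[\|x-e_k\|=\|I_F(z)-I_F(e_k)\|\le\|z-e_k\|=1,\]
and by the dichotomy above this forces $x^k=1$. Suppose instead that $z^k=-1$. Using the fixed point $-e_k\in E$ in the same way gives $\|x+e_k\|\le\|z+e_k\|=1$, hence $x^k=-1$. Thus $x^k=z^k$ for all $k$, so $x=z$.

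The argument is identical for $\ell_\infty$ and for $c$, because the Corollary and the description of $\mathtt{ext}(B_X)$ cover both spaces. The only step needing care is checking that the Corollary may legitimately be applied to $I_F$ rather than to $F$. This holds because $I_F=P_FF_1$ is a composition of a bijective isometry with a non-expansive bijection, so it is itself a non-expansive bijection of $S_X$. I do not expect any real obstacle beyond this check.
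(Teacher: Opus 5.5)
Your proposal is correct and is essentially the paper's argument: set $z=I_F^{-1}(x)$, apply the Corollary to $I_F$ to get $z\in\mathtt{ext}(B_X)$, then compare the distances from $z$ and from $x$ to the fixed point $z^ke_k\in E$. The paper phrases this as a contradiction with the non-contractiveness of $I_F^{-1}$, while you argue directly from the non-expansiveness of $I_F$; this is the same inequality.
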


\begin{proof}
    Let $y=I_F^{-1}(x)$. Then $y \in \mathtt{ext}(B_X)$, and the non-contractiveness of $I_F^{-1}$ implies that $y=x$. Indeed, suppose in contradition that there is a $k \in \N$ such that $y^k=-x^k$. Then $||I_F^{-1}(x)-I_F^{-1}(y^ke_k)||=||y-y^ke_k||=1<2=||x-y^ke_k||$. Thus $I_F^{-1}(x)=x=I_F(x)$. 
\end{proof}

\begin{lemma}\label{linf7}
    Let $F: S_X\rightarrow S_X$ be a non-expansive bijection, $x\in S_X$, $y=I_F(x)$. Then for every $k \in \N$ the following inequalities hold:
    \begin{equation}
        |y^k|\leq |x^k|,
    \end{equation}
    and
    \begin{equation}
        \mathtt{sgn}(x^k)  \mathtt{sgn}(y^k) \geq 0.
    \end{equation}
\end{lemma}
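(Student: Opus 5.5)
Both inequalities will follow from non-expansiveness of $I_F$ tested against the points fixed by $I_F$: the set $E$ (fixed by construction of $I_F$) and $\mathtt{ext}(B_X)$ (fixed by Lemma \ref{linf6}). The idea is to translate distances to these fixed points into information about individual coordinates.

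\textbf{The sign inequality (6).} Suppose $x^k>0$; the case $x^k<0$ is symmetric, and the case $x^k=0$ is trivial. Choose $a\in\mathtt{ext}(B_X)$ with $a^k=-1$. For $c$ we also need $a$ eventually constant, so take $a^j=\mathtt{sgn}(x^j)$ for $j\neq k$ when possible. A cleaner choice is available, since we only need $\|x-a\|<2$ to contradict $\|y-a\|=2$.

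To get that: assume $y^k<0$. Take $a=-e+2\cdot\mathbb{1}$ on the single coordinate $k$, i.e.\ $a=-h_k$, with $a^k=1$ and $a^j=-1$ for $j\neq k$. This gives no control on the other coordinates, so use $E$ instead.
\begin{itemize}
\item Since $\|y\|=1$ and $y^k<0$, we get $\|y-e_k\|=\max(1-y^k,\sup_{j\neq k}|y^j|)=1+|y^k|>1$.
\item On the other hand, $\|x-e_k\|=\max(1-x^k,\sup_{j\ne k}|x^j|)\le 1$ because $x^k>0$.
\end{itemize}
Since $I_F(e_k)=e_k$, non-expansiveness gives $\|y-e_k\|\le\|x-e_k\|\le1$, a contradiction. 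So $y^k\ge0$ whenever $x^k>0$, which is (6) in the nonzero cases.

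\textbf{Zero coordinates.} If $x^k=0$, the same test with $\pm e_k$ gives $\|x\mp e_k\|=1$, hence $\|y\mp e_k\|\le1$. This forces $1\mp y^k\le1$ for both signs, so $y^k=0$. This also settles (5) in this case.

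\textbf{The modulus inequality (5), main step.} Let $x^k>0$ (so $y^k\ge0$ by the above) and suppose $y^k>x^k$. We want a fixed point $p$ with $\|y-p\|>\|x-p\|$.

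Take $p\in\mathtt{ext}(B_X)$ with $p^k=-1$ and $p^j=\mathtt{sgn}$ chosen to make $x$ close to $p$ elsewhere, namely $p^j=1$ if $x^j\ge0$ and $p^j=-1$ otherwise. In $c$ this $p$ may fail to be eventually constant. Then:
\begin{itemize}
\item $\|y-p\|\ge 1+y^k$;
\item $\|x-p\|=\max(1+x^k,\sup_{j\ne k}(1-|x^j|))$.
\end{itemize}
This fails if some $|x^j|$ is small. So I would instead argue by induction or symmetry via $I_F^{-1}$.

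Better: apply the sign result to the map $I_F^{-1}$, which also fixes $E$ and $\mathtt{ext}(B_X)$. The argument for (6) used only the fixed set $E$ and the direction of the inequality, so non-contractiveness of $I_F^{-1}$ must be used differently. I expect this step to be the main obstacle. The plan is to use the test point $p=x^k e_k$-like elements, which are not fixed. Instead I would use $q=\pm e_j$ for $j\ne k$: from $\|y-e_j\|\le\|x-e_j\|$ and $\|y+e_j\|\le\|x+e_j\|$ one gets
\[
\max(1-y^j,\sup_{i\ne j}|y^i|)\le\max(1-x^j,\sup_{i\ne j}|x^i|),
\]
and symmetrically with the other sign. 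Choosing $j$ with $x^j$ of appropriate sign and combining with $\|y\|=1$ gives $\sup_{i\neq j}|y^i|$ bounded by $\max(1-|x^j|,\|x\|)$. I would then try combining these estimates over all $j$ and over the $\pm$ signs to isolate $|y^k|\le|x^k|$, handling the case where $|x^k|$ is the norm-attaining coordinate separately.
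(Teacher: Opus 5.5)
Your proofs of the sign inequality (6) and of the case $x^k=0$ are correct. Testing against $\pm e_k\in E$ is even slightly cleaner than the paper's sign argument, whose sign-matching extreme point $p$ need not lie in $c$ when $x^j\to 0$ with oscillating signs. However, the modulus inequality (5) for $x^k\neq 0$ is never proved: the proposal ends with a plan, and the plan you settle on cannot work. A test against $\pm e_j$ with $j\neq k$ only yields $|y^k|\le\|y\mp e_j\|\le\|x\mp e_j\|$. As soon as $|x^j|<1$ we have $\|x\mp e_j\|\ge\sup_{i\neq j}|x^i|=1$, so these estimates say nothing about coordinate $k$. Indeed, the bound you write down, $\max(1-|x^j|,\|x\|)$, is just $1$.

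The missing step is a one-line estimate against the antipodal vector at coordinate $k$ itself. If $x^k>0$, then
\[
\|x+e_k\|=\max\Bigl(1+x^k,\ \sup_{j\neq k}|x^j|\Bigr)=1+x^k ,
\]
because every other coordinate contributes at most $1$, while $\|y+e_k\|\ge|y^k+1|=1+y^k$. Since $I_F(-e_k)=-e_k$, non-expansiveness gives $1+y^k\le\|y+e_k\|\le\|x+e_k\|=1+x^k$, hence $0\le y^k\le x^k$; the case $x^k<0$ is symmetric with $e_k$.

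In fact the route you abandoned already worked. With $p^k=-1$ and sign-matching $p^j$, one has $\sup_{j\neq k}(1-|x^j|)\le 1<1+x^k$, so $\|x-p\|=1+x^k$ however small the other $|x^j|$ are. Your objection was a miscalculation, and the one real issue there, whether $p\in c$, disappears once $p$ is replaced by $-e_k$.

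The paper argues the same way, symmetrically: for $|x^k|<1$ it uses both fixed points $\pm e_k$ and the identity $|x^k|=\|x-e_k\|+\|x+e_k\|-2$. Applying that identity to $y$ as well gives (5) directly.
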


\begin{proof}
    Let $k \in \N$ be such that $|x^k|<1$, i.e. $\mathtt{max}(||x-e_k||,||x+e_k||)<2$. The non-expansiveness of $I_F$ then implies that $\mathtt{max}(||y-e_k||,||y+e_k||)<2$, i.e. $|y^k|<1$, and 
    \[|x^k|=||x-e_k||+||x+e_k||-2\geq ||y-e_k||+||y+e_k||-2=|y^k|.\]
    This proves (5) for all $k\in \N$ such that $|x^k|<1$, and (5) is obviously true for all $k\in \N$ such that $|x^k|=1$. Now suppose the contrary to (6), i.e. there is a $k\in \N$ such that $\mathtt{sgn}(x^k)  \mathtt{sgn}(y^k)=-1$, and let $p \in \mathtt{ext}(B_X)$ be such that $p^j=\mathtt{sgn}(x^j)$ for all $j \in \mathtt{supp}(x)$. Then $||y-p||>1\geq||x-p||$, condradicting the fact that $I_F$ is non-expansive.
\end{proof}

For every $n \in \N$ let $M_n\subset S_X$ be defined as the set of all vectors $x \in S_X$ for which there exist disjoint subsets $A_0,A_1,...,A_n$ of $\N$ and real non-negative scalars $r_1,r_2,...,r_n$ such that $|x^i|=r_k$ for all $i \in A_k$, $1\leq k \leq n$, and

\begin{equation}
    1>r_1 > r_2 >...> r_n,
\end{equation}

\begin{equation}
    \bigsqcup_{n\in \N }A_n = \N \setminus A_0,
\end{equation}

\begin{equation}
    |A_0|=|\{k \in \N :|x^k|=1\}|=1.
\end{equation}
Here the only difference between $c$ and $\ell_\infty$ is that for $X=c$ all the sets $A_k$ except one are finite.
Let the union of all such sets $M_n$ be denoted by $M$.

\begin{lemma}\label{lem_main}
    Let $F: S_X\rightarrow S_X$ be a non-expansive bijection. Then the elements of $M$ are fixed points of $I_F$.
\end{lemma}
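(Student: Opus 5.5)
The plan is to show that for $x\in M$ and $y=I_F(x)$ we have $|y^k|\geq |x^k|$ for every $k$. Combined with Lemma \ref{linf7}, which gives $|y^k|\leq|x^k|$ and $\mathtt{sgn}(x^k)\mathtt{sgn}(y^k)\geq 0$, this yields $y=x$. The only tool is the non-expansiveness of $I_F$ tested against points already known to be fixed: if $z$ is fixed, then $\|y-z\|=\|I_F(x)-I_F(z)\|\leq\|x-z\|$, so every coordinate of $y$ lies within $\|x-z\|$ of the corresponding coordinate of $z$. The fixed points available are $\mathtt{ext}(B_X)$ (Lemma \ref{linf6}), $E$, and, inductively, the elements of $M_{n-1}$. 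So I would argue by induction on $n$, for $x\in M_n$, with data $A_0,\dots,A_n$ and $r_1>\dots>r_n$.

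\emph{Coordinate in $A_0$.} For $i_0\in A_0$ we have $|x^{i_0}|=1$. To get $y^{i_0}=x^{i_0}$, I would compare $x$ with the extreme point $p$ having $p^j=\mathtt{sgn}(x^j)$ on $\mathtt{supp}(x)$. This gives $\|x-p\|\leq 1$ and hence $\|y-p\|\leq 1$. Since $p^{i_0}=x^{i_0}$ and $y^{i_0}$ has the same sign as $x^{i_0}$, this forces $y^{i_0}$ to agree in sign with $x^{i_0}$; I still need to extract the modulus $|y^{i_0}|=1$ from this comparison.

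\emph{Sets $A_k$ with $k\geq 2$.} Let $z$ be obtained from $x$ by raising the moduli on $A_k$ from $r_k$ to $r_{k-1}$, keeping signs. Then $z\in M_{n-1}$ (the sets $A_{k-1}$ and $A_k$ merge), with $\|x-z\|=r_{k-1}-r_k$. By the induction hypothesis $z$ is fixed, so for $i\in A_k$
\[
|y^i|\geq |z^i|-\|y-z\|\geq r_{k-1}-(r_{k-1}-r_k)=r_k .
\]
The sign condition is used here to write $|y^i-z^i|=r_{k-1}-|y^i|$. This handles every $A_k$ with $k\geq2$; if $r_n=0$, those coordinates are zero in $y$ automatically by (5).

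\emph{Base case and $A_1$ (main obstacle).} This is the step I expect to be hardest, because $A_1$ cannot be merged upward: its coordinates cannot be raised to modulus $1$ without leaving $M$, since (9) requires $|A_0|=1$. For the base case $M_1$ and for $A_1$, I would use extreme points instead. If all coordinates of $x$ outside $A_0$ are nonzero, take $p=\mathtt{sgn}(x)$. Then $\|x-p\|=1-r_{\min}$, where $r_{\min}$ is the smallest nonzero modulus, and so $|y^i|\geq r_{\min}$ everywhere. For $M_1$ this already gives $|y^i|\geq r_1$ on $A_1$. In general I would first lift the lower levels using the induction step above, then use a point of $M_1$ or an extreme point adapted to $A_0\cup A_1$, arranged so that the distance to $x$ is exactly $1-r_1$ on the relevant coordinates. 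Zero coordinates of $x$ are the delicate case, since they push distances to extreme points up to $1$. There I would first try replacing the extreme point by the point of $M_1$ equal to $\mathtt{sgn}(x^i)\,r$ on the support and $0$ off it, and check separately that the $c$ version works, which needs all but one $A_k$ finite.
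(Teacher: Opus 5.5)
\textbf{Where your proposal agrees with the paper.} Your base case on $M_1$ matches the paper's argument. So does your treatment of the levels $A_k$ with $k\ge 2$: raise $A_k$ to the level of $A_{k-1}$, land in $M_{n-1}$, and use the induction hypothesis. The coordinate $i_0\in A_0$ needs no test point. By (5), $|y^l|\le |x^l|\le r_1<1$ for every $l\neq i_0$, so $\|y\|=1$ forces $|y^{i_0}|=1$, and (6) gives the sign.

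\textbf{The gap: the level $A_1$ for $n\ge 2$.} There you only compare $x$ with a single point $z$ already known to be fixed (an extreme point, or a point of $M_1$). But $\|x-z\|$ is a supremum over all coordinates. The coordinates in $A_0$ and in the lower levels keep it large, so it cannot be made equal to $1-r_1$ ``on the relevant coordinates''.

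In fact no test against the fixed points your induction has at stage $n=2$ can work. Those fixed points are $M_1\cup\mathtt{ext}(B_X)$, which contains $E$. Take $x=(1,0.9,0.1,0.1,\dots)\in M_2$ and $y=(1,0.9-\delta,0.1,0.1,\dots)$ with $0<\delta\le 0.35$. Then $y$ satisfies (5) and (6), and $\|y-z\|\le\|x-z\|$ for every $z\in M_1\cup\mathtt{ext}(B_X)$. Indeed, only the second coordinate differs, and $|y^2-z^2|\le|x^2-z^2|$ unless $z^2>0.9-\delta/2$. In that remaining case there are two possibilities.
\begin{enumerate}
\item The unimodular coordinate of $z$ is the second one. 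Let $s<1$ be the common modulus of its other coordinates. Then $|y^2-z^2|=0.1+\delta\le 0.45\le\max(1-s,|s-0.1|)\le\|x-z\|$.
\item Otherwise $s=z^2$ (with $s=1$ for extreme points), and some coordinate $l\ge 3$ of $z$ has modulus $s$. Then $\|x-z\|\ge s-0.1\ge s-0.9+\delta=|y^2-z^2|$.
\end{enumerate}
So everything your method can use is consistent with $y\neq x$.

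\textbf{The missing idea.} The paper closes this step by applying non-expansiveness to a pair of points neither of which is known to be fixed.
\begin{enumerate}
\item Assume $|y^i|<|x^i|$ for some $i\in A_1$. Pick $\lambda\in(r_1,1)$ with $1-\lambda<|x^i|-|y^i|$. Let $p\in M_n$ be $x$ with the moduli on $A_1$ raised to $\lambda$, and set $q=I_F^{-1}(p)$.
\item Descending from $A_n$ to $A_2$, show $q^j=p^j$ for every $j\notin A_1\cup A_0$. For $A_0$ this is Lemma~\ref{linf7} applied to $q$. For each such $j$, take $\varepsilon\in(0,1-\lambda)$ and compare $p$ and $q$ with the point $\tilde p\in M_{n-1}$ that has its unimodular coordinate at $j$, modulus $\lambda+\varepsilon$ on $A_0\cup A_1\cup(A_2\setminus\{j\})$, and modulus $r_k+\varepsilon$ on $A_k\setminus\{j\}$ for $k\ge 3$.
\item Non-contractiveness of $I_F^{-1}$ gives $\|q-\tilde p\|\ge\|p-\tilde p\|=1-|p^j|$. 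Using the levels already treated and Lemma~\ref{linf7} for $q$, every coordinate $l\neq j$ of $q-\tilde p$ stays strictly below $1-|p^j|$. This forces $|q^j|\le|p^j|$, hence $q^j=p^j$.
\item Consequently $\|q-p\|\le 1-\lambda$, and
\[
\|q-x\|\le(1-\lambda)+(\lambda-r_1)<\lambda-|y^i|\le\|p-y\|=\|I_F(q)-I_F(x)\|,
\]
which contradicts non-expansiveness.
\end{enumerate}
This use of the inverse map on a non-fixed auxiliary point is what your proposal lacks.
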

\begin{proof}
    We are going to prove by induction that the elements of $M_n$ are fixed points of $I_F$ for every natural $n$.
    Let $x \in M_1$, $y = I_F(x)$. Then there is an $r \in (-1,1)$ such that $x^k=r$ for every $k\in \N \setminus \{N\}$ for some $N \in \N$. If $r=0$, then $x=\pm e_N$, and thus $x$ is a fixed point of $I_F$, so we may WLOG assume that $|r|\in (0,1)$. It is implied by (5) that $|y^k|\leq |r|<1$ for every $k \in \N \setminus \{N\}$, so $|y^N|=1=|x^N|$, and (6) means that $y^N=x^N$. Now let $p\in \mathtt{ext}(B_X)$, $p^k=\mathtt{sgn}(x^k)$. The non-expansiveness of $I_F$ and (6) together mean that $||p-x||=1-|r|\geq ||p-y||\geq 1-|y^k|$ for every $k \in \N$, thus $x=y=I_F(x)$.
    Now assume that for some natural $n \geq 2$ all the elements of $M_{n-1}$ are fixed points of $I_F$, and let $x \in M_{n}$, $y=I_F(x)$. Let $A_0,A_1,...A_n\subset \N$ and $r_1,r_2,...,r_n\in [0, 1)$ be such that $|x^i|=r_k$ for all $i \in A_k$, $1 \leq k \leq n$, with conditions (7-9) satisfied. First we are going to demonstrate that $x^i=y^i$ for all $i\in A_k$, $2 \leq k \leq n$. Let $2 \leq k \leq n$, and define $\tilde{x} \in S_X$ by 
     \[\tilde{x}^i=\begin{cases} \mathtt{sgn}(x^i) \frac12  (r_{k-1}+r_k), \quad i \in A_{k-1} \sqcup A_k,\\
    x^i, \quad i \in \N \setminus (A_{k-1} \sqcup A_k).
    \end{cases}\]
    Then $\tilde{x}\in M_{n-1}$, so $\tilde{x}$ is a fixed point of $I_F$, and \[||\tilde{x}-y|| \leq ||\tilde{x}-x||=|\tilde{x}^i|-|x^i|\leq |\tilde{x}^i|-|y^i| \leq ||\tilde{x}-y||\]
    for all $i \in A_k$.\\
    Now all that remains is to show that $x^i=y^i$ for every $i \in A_1$. 
    Suppose in contradiction that $|x^i|>|y^i|$ for some $i \in A_1$, and let $\lambda \in (r_1,1)$ be such that $1-\lambda <|x^i|-|y^i|$. Now let $p \in M_n$ be defined by
    \[p^j=\begin{cases} \lambda \mathtt{sgn}(x^j) , \quad j \in A_1,\\
    x^j, \quad j \in \N \setminus A_1,
    \end{cases}\]
    and let $q=I_F^{-1}(p)$.
    We are going to show that
   \begin{equation}\label{eqns}
       q^j=p^j=x^j=y^j
   \end{equation} 
    for every $j \in \N \setminus A_1$, so that 
    \begin{equation*}
        ||p-q||=\sup_{j\in A_1} |p^j-q^j|\leq 1-\lambda.
    \end{equation*}
    Fix some arbitrary $j \in \N \setminus (A_1 \sqcup A_0)$, and let $\varepsilon \in (0,1-\lambda)$. Now define $\tilde p \in M_{n-1}$ by 
    \[\tilde{p}^l=\begin{cases} \mathtt{sgn}(p^l) , \quad l=j,\\
    (\lambda+\varepsilon) \mathtt {sgn}(x^l), \quad l \in A_0 \sqcup A_1 \sqcup (A_2 \setminus  \{j\}),\\
    p^l + \varepsilon \mathtt{sgn}(p^l), \quad l \in A_k \setminus \{j\}, \ k \geq 3.
    \end{cases}\]
Then 
\[|p^l-\tilde{p} ^l|=\begin{cases} 1-|p^l| , \quad l=j,\\
   1- \lambda-\varepsilon , \quad l \in A_0 ,\\
    \lambda +\varepsilon -|p^l|, \quad l \in A_2 \setminus \{j\}, \\
    \varepsilon, \quad l \in A_1 \sqcup(A_k \setminus \{j\}), \ k \geq 3,
    \end{cases}\]
so $|p^j-\tilde{p}^j|=1-|p^j|>|p^l-\tilde{p} ^l|$ for every $l \neq j$. If $j \in A_k$, $2 \leq k \leq n$, and $l \in A_m$, $0 \leq m \leq k$, $l \neq j$, then 
\begin{equation*}
    |q^l-\tilde p ^l| \leq \begin{cases}
        1 - \lambda - \varepsilon, \quad l \in A_0,\\
        \max (\varepsilon, 1-\lambda-\varepsilon), \quad l \in A_1,\\
        \max(\lambda +\varepsilon -r_2, 1-\lambda-\varepsilon), \quad l \in A_2,\\
        \max(\varepsilon,1-r_m-\varepsilon), \quad l \in A_m, \ 2<m\leq k,
    \end{cases}
\end{equation*}
so
\begin{equation}
    |q^l-\tilde p^l|<1-|p_j|=|p^l-\tilde p^j|=||p-\tilde p||.
\end{equation}
Thus (10) follows from (11) for $j \in A_n$, and for $j \in A_k$, $2 \leq k <n$, it follows from (11) and the fact that $q^{l}=p^{l}$ for every $l \in \bigsqcup_{m=k+1}^nA_m$.\\
Now we can finally complete the proof by contradiction:
\[||q-x||\leq||q-p||+||p-x||\leq1-\lambda+||p-x||<|x^i|-|y^i|+||p-x|| \leq||p-y||=||I_F(q)-I_F(x)||.\]
   The strict inequality in the chain above contradicts the fact that $I_F$ is non-expansive.
\end{proof}
The following theorem is a direct corollary of Lemma \ref{lem_main} and the fact that $M$ is dense in $S_X$.
\begin{theorem}
    Let $F:S_X \rightarrow S_X$ be a non-expansive bijection.
    Then $F$ is an isometry.
\end{theorem}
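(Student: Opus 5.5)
By Lemma \ref{lem_main}, every element of $M$ is a fixed point of $I_F=P_F T_{F^{-1}(e)}F$. The plan is to extend this to all of $S_X$ by density and continuity, and then transfer the conclusion from $I_F$ back to $F$.

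\emph{Continuity and density.} Since $I_F$ is non-expansive, it is $1$-Lipschitz and hence continuous on $S_X$. Its fixed-point set $\{x\in S_X: I_F(x)=x\}$ is therefore closed in $S_X$. If $M$ is dense in $S_X$, this closed set contains the closure of $M$, so it is all of $S_X$, that is, $I_F=\mathrm{id}_{S_X}$. Since $P_F$ and $T_{F^{-1}(e)}$ are bijective isometries of $S_X$, we get
\[F=T_{F^{-1}(e)}^{-1}P_F^{-1},\]
a composition of isometries. Hence $F$ is an isometry.

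\emph{Density of $M$ (the only real step).} Fix $x\in S_X$ and $\varepsilon\in(0,1/4)$. I will build $z\in M$ with $\|z-x\|\le 3\varepsilon$.

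First, choose a single index $N$ with $|x^N|>1-\varepsilon$, which exists because $\|x\|=1$. Set $z^N=\mathtt{sgn}(x^N)$; this fixes $A_0=\{N\}$.

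Next, handle every other index $i\neq N$. Replace $x^i$ by $\mathtt{sgn}(x^i)\,r(i)$, where $r(i)$ is $\min(|x^i|,1-\varepsilon)$ rounded down to the finite grid $\{0,\varepsilon,2\varepsilon,\dots\}$, and where $\mathtt{sgn}(0)$ is taken as $+1$ so that zero coordinates are assigned the value $0$. Then every $i\neq N$ satisfies $|z^i|\le 1-\varepsilon<1$, so condition (9) holds. The modulus $|z^i|$ takes only finitely many values $r_1>\dots>r_n$, all strictly below $1$, which gives (7) and (8). Each coordinate has moved by at most $2\varepsilon$, so $\|z-x\|\le 2\varepsilon$.

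\emph{The case $X=c$.} Here $z$ must lie in $c$, and all level sets $A_k$ except one must be finite. Let $L=\lim x^i$. I modify the construction so that it is continuous in the tail: pick $K$ such that $|x^i-L|<\varepsilon$ for all $i\ge K$, and set $z^i=\mathtt{sgn}(L)\,r$ for all $i\ge K$, where $r$ is the grid value for $\min(|L|,1-\varepsilon)$.

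Two details need checking. If $L=0$, set $z^i=0$ for $i\ge K$; this gives a single infinite level set with $r_k=0$, which is allowed since the scalars $r_k$ are only required to be non-negative. If $|L|=1$, the cap $1-\varepsilon$ keeps every tail coordinate strictly below $1$. With these choices, every level set other than the tail one is finite, $z\in c$, and $\|z-x\|\le 3\varepsilon$.

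Thus $M$ is dense in $S_X$ for both spaces, which completes the argument.
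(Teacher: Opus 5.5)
Your proof is correct and follows the paper's route: the paper derives the theorem as a direct corollary of Lemma~\ref{lem_main} and the density of $M$ in $S_X$, which it asserts without proof, with continuity of $I_F$ and $F=T_{F^{-1}(e)}^{-1}P_F^{-1}$ doing the rest, so your grid construction usefully supplies the omitted density step. One detail should be stated explicitly: for $X=c$ you should take $K>N$ (always possible), since otherwise the tail assignment can overwrite $z^N$ and then $\|z\|<1$; for example, for $x^i=1-1/i$ with $K$ minimal, every admissible $N$ satisfies $N\ge K$.
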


\end{document}